\journalname{[Insert Journal Name Here]}
\DeclareMathOperator{\R}{\mathbb{R}}
\DeclareMathOperator{\Z}{\mathbb{Z}}
\newcommand{\Dt}{\mathrm{d}t}
\renewcommand{\O}{\mathcal{O}}
\newcommand{\eps}{\varepsilon}
\newcommand{\dt}{\Delta t}
\newcommand{\Phie}{\Phi_{\text{E}}}
\newcommand{\Phii}{\Phi_{\text{I}}}
\newcommand{\Tend}{T_{end}}
\newcommand{\I}{\mathcal{I}}
\newtheorem{postulate}{Postulate}
\newtheorem{definition}{Definition}
\newtheorem{algorithm}{Algorithm}
\newtheorem{theorem}{Theorem}
\newtheorem{lemma}{Lemma}
\newtheorem{remark}{Remark}
\newcommand{\lims}[2]{#1_{(#2)}}
\newcommand{\alg}[2]{#1^{[#2]}}
\renewcommand{\dot}[1]{\overset{\boldsymbol .}{#1}}
\pgfplotsset{compat=1.14}   
\begin{document}

\begin{frontmatter}



\dochead{}

\title{An asymptotic preserving semi-implicit multiderivative solver}

\author{Jochen Sch\"utz$^\dagger$}
\author{David C. Seal$^*$ }

\address{$\dagger$Faculty of Sciences, Hasselt University, Agoralaan Gebouw D, BE-3590 Diepenbeek \\
*United States Naval Academy, Department of Mathematics, 572C Holloway Road, Annapolis, MD 21402, USA
}

\begin{abstract}
  In this work we construct a multiderivative implicit-explicit (IMEX) scheme for a class of stiff ordinary differential equations.  Our solver is high-order accurate and has an asymptotic preserving (AP) property.  The proposed method is based upon a two-derivative backward Taylor series base solver, which we show has an AP property.  Higher order accuracies are found by iterating the result over a high-order multiderivative interpolant of the right hand side function, which we again prove has an AP property.  Theoretical results showcasing the asymptotic consistency as well as the high-order accuracy of the solver are presented.  In addition, an extension of the solver to an arbitrarily split right hand side function is also offered.  Numerical results for a collection of standard test cases from the literature are presented that support the theoretical findings of the paper.
\end{abstract}

\begin{keyword}
Multiderivative \sep IMEX \sep singularly perturbed ODE \sep asymptotic preserving



\end{keyword}

\end{frontmatter}


\section{Introduction}
\label{sec:introduction}

In this work we consider the numerical approximation of the system of differential equations 
\begin{align}
  \label{eq:vdp}
  {y}'(t) &= {z}(t), \qquad {z}'(t) = \frac{g(y(t),z(t))}{\eps}, \quad 0 \leq t \leq T,
\end{align}
where $g: \R^2 \rightarrow \R$ is a smooth function.  Our goal is to construct and analyze a high-order, semi-implicit, mulitderivative asymptotic preserving solver for this class of equations.  This problem is
equipped with initial conditions at time $t = 0$ defined by
\begin{align}
 \label{eq:initial}
 y(0) = y^0, \qquad z(0) = z^0,
\end{align}
and we assume $0<\eps \ll 1$.  The presence of this stiff relaxation parameter turns the problem into a singularly perturbed equation. At this point there is a vast amount of literature on problems of this kind.  We refer the interested reader to the classical books \cite{HaiWan,KC,robert2012singular} and the references therein for an overview on both analysis and applications of this class of problems.

Provided that the initial conditions are appropriately chosen, Eqn. \eqref{eq:vdp} exhibits a multiscale behavior due to the presence of the stiff relaxation parameter.  Modern solvers leverage this behavior by splitting the equation into `stiff' and `non-stiff' terms for efficient implementations of a numerical discretization. This procedure leads to the now famous implicit-explicit, or IMEX class of methods \cite{Roux1979,Crouzeix1980,Ascher1995,Ascher1997,Bo07,Bos09,RuBosc09}.  IMEX methods can usually be classified as either being an IMEX Runge-Kutta method, or an IMEX multistep method.  Recent work has included definitions for IMEX General Linear Methods (GLMs) \cite{Zhang2014}.

A distinct class apart from the aforementioned time integrators include multiderivative methods \cite{HaWa73}.  These solvers have recently been proven to be promising alternatives to classical Runge-Kutta and multistep schemes, and they are currently experiencing a renaissance with regard to their application to PDEs \cite{TC10,Seal13,SSJ2017}. Much like Taylor approximations, these solvers work with not only with the first derivative, $y' \equiv z$ and $z' \equiv \frac g \eps$, respectively, but they also leverage higher time derivatives of the unknowns. In doing so, the formulation of a solver becomes more intricate, but the tradeoff is that it makes the formulation more local, meaning additional information about the ODE can be garnered from each time point or stage value in the solver.  This is particularly beneficial for modern high-performance computing architectures, as these solvers have the potential to reduce the memory overhead. To the best of our knowledge, there currently exists no extensions of multiderivative methods to IMEX schemes, which is the subject and aim of this work.


We propose a fourth-order multiderivative IMEX scheme that is based in part on an idea similar to the spectrally deferred correction (SDC) method \cite{DuttGreenRokh00,Minion2003,boscarino2017implicit}. Our solver makes use of a second-order two-derivative IMEX Taylor series base solver, which we carefully construct in such a way that it contains an asymptotic preserving property.  This solver serves as a `predictor', upon which iterations are performed on a higher-order scheme that lets us pick up the order of accuracy of the method.  In the current work we stop at fourth-order accuracy but our method can be extended to higher orders based upon what is presented here.  In this work we analyze the method with respect to its asymptotic properties, we show that it is asymptotically consistent \cite{Jin99,Jin2012} with the continuous asymptotics, and we show numerical results indicating we do indeed find high-order accuracy. Finally, we show that order reduction common to many IMEX solvers for stiff relaxation problems can be overcome in some situations with our proposed method.

The paper is structured as follows: In Sec.\,\ref{sec:equation}, we discuss to the necessary theoretical details for equation \eqref{eq:vdp}, including restrictions on $g$ and on the initial data. In Sec.\,\ref{sec:imexmd}, the proposed method is presented and consistency in the discretization parameter $\dt$ is shown. This is followed by an \emph{asymptotic} consistency analysis in Sec.\,\ref{sec:ac}, i.e., consistency in the singular parameter $\eps$ is demonstrated. Numerically, in Sec.\,\ref{sec:aa}, we discuss the phenomenon of order reduction and show how the algorithm can be used to address common challenges found with stiff IMEX solvers.  As not all singularly perturbed ODEs are of the form defined in \eqref{eq:vdp}, we extend the method to more general ordinary differential equation in Sec. \ref{sec:stability}, where we include stability results for a prototype equation that contains an additive right hand side. Finally, we offers conclusions and an outlook for future work in Sec.\,\ref{sec:conout}.

\section{The underlying equations: Necessary analytic properties}
\label{sec:equation}

We consider the equation as given in \eqref{eq:vdp}. 
Although the solution to the equations $y$ and $z$ depend on $\eps$, we only bring this up when necessary in our analysis. In addition, the dependence on the time variable is typically only written explicitly if needed.
Furthermore, we
need some (frequently used) assumptions on $\partial_z g$, which are related to stable manifolds of the $\eps \rightarrow 0$ limit of the solution \cite{HaiWan}. 
\begin{postulate}\label{pos:dzg}
Assume that $g$ is a smooth function, and that $\partial_z g(y,z) < -c$ for all $y, z \in \R$, with $c > 0$ being a positive constant. 
\end{postulate}
\begin{remark}
 Note that many equations only fulfill the assumption locally. This would be enough for our purposes, a local truncation of $g$ would then be sufficient. 
\end{remark}
As our solver is a multiderivative solver, we need information about the second derivative of the unknown variables.  A straightforward computation gives use the following Lemma.

\begin{lemma}\label{la:sndyz}
  For the solutions $y$ and $z$ to \eqref{eq:vdp}, there holds
  \begin{equation} 
   {y''} = \frac{g(y,z)}{\eps}, \qquad \text{and} \qquad {z''} = \frac{1}{\eps} \underbrace{\nabla g(y,z) \cdot \left(\begin{matrix} z \\ \frac{g(y,z)}{\eps} \end{matrix} \right)}_{=:\dot g(y,z)}.
  \end{equation}
  By $\nabla g$, we denote the vector $\nabla g := \left(\partial_y g,  \partial_z g \right)$, i.e., differentiation with respect to $y$ and $z$. 
\end{lemma}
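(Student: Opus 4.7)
The plan is to prove both identities by direct differentiation of the system \eqref{eq:vdp} with respect to $t$, invoking the chain rule on the composite function $g(y(t),z(t))$. Smoothness of $g$ (from Postulate \ref{pos:dzg}) guarantees that $g \circ (y,z)$ is continuously differentiable, so these manipulations are legitimate.

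First I would handle the $y''$ identity. Since $y' = z$ by the first equation of \eqref{eq:vdp}, differentiating once more in $t$ gives $y'' = z'$, and substituting the second equation of \eqref{eq:vdp} yields $y'' = g(y,z)/\eps$ immediately.

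Next, for $z''$, I would start from $z' = g(y,z)/\eps$ and differentiate in $t$. The factor $1/\eps$ is constant in $t$, so
\[
z'' \;=\; \frac{1}{\eps}\,\frac{d}{dt}\, g\bigl(y(t),z(t)\bigr) \;=\; \frac{1}{\eps}\bigl(\partial_y g \cdot y' + \partial_z g \cdot z'\bigr).
\]
Substituting $y' = z$ and $z' = g(y,z)/\eps$ from \eqref{eq:vdp} into the bracket produces exactly the inner product of $\nabla g = (\partial_y g,\partial_z g)$ with the column vector $(z,\, g(y,z)/\eps)^T$, which matches the definition of $\dot g(y,z)$ in the statement.

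There is no real obstacle here: the argument is a one-line application of the chain rule in each case, and the only point worth emphasizing in writing is the careful bookkeeping of the $1/\eps$ factors so that the second component of the vector inside $\dot g$ comes out as $g(y,z)/\eps$ rather than $g(y,z)$. Once both computations are assembled, the two displayed identities follow and the proof is complete.
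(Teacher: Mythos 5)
Your argument is correct and is exactly the ``straightforward computation'' the paper alludes to before stating the lemma: differentiate each equation of \eqref{eq:vdp} once more in $t$ and apply the chain rule to $g(y(t),z(t))$, substituting $y'=z$ and $z'=g/\eps$. Nothing is missing, and your bookkeeping of the $1/\eps$ factors matches the paper's definition of $\dot g$.
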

For the sake of having more explicit error constants, we assume that $g$ and $\dot g$ are Lipschitz. 
\begin{postulate}\label{pos:lipschitz}
 We assume  that $g$ and $\dot g$ are Lipschitz, and that there holds
 \begin{equation}
  \|g(y_1,z_1) - g(y_2,z_2) \| \leq L_g \left\|\begin{matrix} y_1-y_2 \\ z_1-z_2\end{matrix}\right\|, \qquad
  \|\dot g(y_1,z_1) - \dot g(y_2,z_2) \| \leq \frac{L_{\dot g}}{\eps} \left\|\begin{matrix} y_1-y_2 \\ z_1-z_2\end{matrix}\right\|,
 \end{equation}
 where $L_g$ and $L_{\dot g}$ are constants independent of $\eps$. 
 Note that the $\eps^{-1}$ scaling for $\dot g$ is the correct one to use here (cf.\ La.~\ref{la:sndyz}).
\end{postulate}

Problems of the form \eqref{eq:vdp} have an interesting structure in the asymptotic limit as $\eps \to 0$.  The starting point for analyzing these problems is to first assume that both $y$ and $z$ can be written in terms of a Hilbert expansion in $\eps$, i.e., 
\begin{align} 
\label{eq:hilbert}
\begin{split}
 y(t) = \lims y 0(t) + \eps \lims y 1 (t) + \eps^2 \lims y 2(t) + \O(\eps^3), \\
 z(t) = \lims z 0(t) + \eps \lims z 1 (t) + \eps^2 \lims z 2(t) + \O(\eps^3). 
\end{split}
\end{align}
(For a proof of this identity and more related results, see \cite{HaiWan} and the references contained within.) By substituting these expansions into the system of differential equations, equations that each of the $y_{(0)}, z_{(0)}, y_{(1)}, z_{(1)}, \dots$ must satisfy can be derived.  For example, substituting into the first equation in \eqref{eq:vdp}, we find that
\begin{equation}
\lims y 0'(t) + \eps \lims y 1'(t) + \eps^2 \lims y 2'(t) + \O(\eps^3) = \lims z 0(t) + \eps \lims z 1 (t) + \eps^2 \lims z 2(t) + \O(\eps^3).
\end{equation}
As $\eps \to 0$, we have the identity $\lims y 0' = \lims z 0$.  After substituting the Hilbert expansion \eqref{eq:hilbert} into the second equation, we find
\begin{equation}
\lims {z'} 0(t) + \eps \lims {z'} 1 (t) + \eps^2 \lims {z'} 2(t) + \O(\eps^3) = \frac{1}{\eps}  g\bigg(\lims y 0 + \eps \lims y 1 + \O(\eps^2), \lims z 0 + \eps \lims z 1 + \O(\eps^2)\bigg).
\end{equation}
A Taylor expansion of $g$ is given by
\begin{align}
\label{eq:taylor_on_g}
g\bigg(\lims y 0 + \eps \lims y 1 + \O(\eps^2), \lims z 0 + \eps \lims z 1 + \O(\eps^2)\bigg) 
= g(\lims y 0, \lims z 0) + \eps \nabla g(\lims y 0, \lims z 0) \cdot \left(\begin{matrix} \lims y 1 \\ \lims z 1 \end{matrix}\right) + \O(\eps^2), 
\end{align}
which yields with \eqref{eq:vdp}
\begin{equation}
\eps \lims {z'} 0 + \O(\eps^2) = g( \lims y 0, \lims z 0) + \eps \nabla g(\lims y 0, \lims z 0) \cdot \left(\begin{matrix} \lims y 1 \\ \lims z 1 \end{matrix}\right) + \O( \eps^2 ),
\end{equation}
from which one can conclude 
\begin{equation} 
 0 = g( \lims y 0, \lims z 0 ).
\end{equation}
That is, under the assumption that $y$ and $z$ had Hilbert expansions, we have that the first order terms $\lims y 0$ and $\lims z 0$ fulfill the differential-algebraic equation 
\begin{align}
 \label{eq:vdp_lim}
 \lims y 0' = \lims z 0, \qquad 0 = g(\lims y 0, \lims z 0).
\end{align}
Eqn. \eqref{eq:vdp_lim} is the limit equation to first order, it is of course possible to extend this procedure. 
For example, based on the Taylor expansion for $g$ given in \eqref{eq:taylor_on_g}, we find that to second order there holds 
\begin{equation}
 \label{eq:vdp_lim2}
 \lims y 1' = \lims z 1, \qquad \lims z 0' = \nabla g(\lims y 0, \lims z 0) \cdot \left(\begin{matrix} \lims y 1 \\ \lims z 1 \end{matrix}\right). 
\end{equation}
These equations can be combined to produce identities for the higher derivatives not only of the original functions $y$ and $z$, but also of the asymptotic quantities $\lims y 0$ and the like.  For example, we have the following result.
\begin{lemma}
	The solutions $\lims y 0$ and $\lims z 0$ to \eqref{eq:vdp_lim} satisfy
	\begin{equation}
	\lims y 0 '' = \frac{-\partial_y g(\lims y 0, \lims z 0) \lims z 0}{\partial_z g(\lims y 0, \lims z 0)}.
	\end{equation}
\end{lemma}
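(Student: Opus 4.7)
The plan is to exploit both parts of the limit system \eqref{eq:vdp_lim}: the differential relation $\lims y 0' = \lims z 0$ gives an identity for $\lims y 0''$ in terms of $\lims z 0'$, while the algebraic constraint $g(\lims y 0, \lims z 0) = 0$ supplies, via implicit differentiation, an expression for $\lims z 0'$ itself. Combining the two yields the claimed formula.

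\medskip

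\noindent\textbf{Step 1.} Differentiate $\lims y 0' = \lims z 0$ in $t$ to obtain
\begin{equation*}
\lims y 0'' = \lims z 0'.
\end{equation*}
Thus the problem reduces to computing $\lims z 0'$.

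\medskip

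\noindent\textbf{Step 2.} Apply $\tfrac{d}{dt}$ to the algebraic side $g(\lims y 0(t), \lims z 0(t)) = 0$. The chain rule gives
\begin{equation*}
\partial_y g(\lims y 0, \lims z 0)\, \lims y 0' + \partial_z g(\lims y 0, \lims z 0)\, \lims z 0' = 0.
\end{equation*}
Substituting $\lims y 0' = \lims z 0$ from \eqref{eq:vdp_lim} gives
\begin{equation*}
\partial_y g(\lims y 0, \lims z 0)\, \lims z 0 + \partial_z g(\lims y 0, \lims z 0)\, \lims z 0' = 0.
\end{equation*}

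\medskip

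\noindent\textbf{Step 3.} Solve for $\lims z 0'$. The division is legal because Postulate~\ref{pos:dzg} guarantees $\partial_z g(y,z) < -c < 0$ uniformly; in particular $\partial_z g(\lims y 0, \lims z 0)$ never vanishes. This yields
\begin{equation*}
\lims z 0' = -\frac{\partial_y g(\lims y 0, \lims z 0)\, \lims z 0}{\partial_z g(\lims y 0, \lims z 0)},
\end{equation*}
and combining with Step~1 proves the lemma.

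\medskip

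There is essentially no obstacle here beyond making sure that implicit differentiation is permissible, which is exactly where Postulate~\ref{pos:dzg} enters; smoothness of $g$ (also assumed in that postulate) justifies the chain rule. The result is then a one-line computation, and the same technique generalises in the obvious way to obtain analogous closed-form expressions for higher derivatives of $\lims y 0$ and $\lims z 0$, which is presumably why the lemma is recorded here as a prototype.
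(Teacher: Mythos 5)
Your proof is correct and follows exactly the route the paper indicates: differentiate both relations in \eqref{eq:vdp_lim} with respect to time, apply the chain rule to the algebraic constraint, and solve for $\lims z 0'$ using Postulate~\ref{pos:dzg} to justify the division. This is the same argument the paper leaves as a one-line remark, just written out in full.
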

\begin{proof}
	Differentiate \eqref{eq:vdp_lim} with respect to time and make use of the chain rule.
\end{proof}
Finally, we further note that thanks to $g(\lims y 0, \lims z 0) = 0$ in Eqn.\,\eqref{eq:vdp_lim}, we also have
\begin{align} 
 0 &= \frac{d}{dt} g(\lims y 0, \lims z 0) = \nabla g(\lims y 0, \lims z 0) \cdot \left(\begin{matrix} \lims y 0' \\ \lims z 0' \end{matrix}\right) 
 \stackrel{\eqref{eq:vdp_lim},\eqref{eq:vdp_lim2}}= \nabla g(\lims y 0, \lims z 0) \cdot \left(\begin{matrix} \lims z 0 \\ \nabla g(\lims y 0, \lims z 0) \cdot \left(\begin{matrix} \lims y 1 \\ \lims z 1 \end{matrix}\right) \end{matrix}\right) 
\end{align}
This property is important in our asymptotic analysis.

The proposed numerical scheme makes use of not only the right-hand side of \eqref{eq:vdp}, but also on the temporal derivative thereof. Intuitively, it is therefore reasonable to extend the concept of \emph{well-preparedness} \cite{SKN15} to cope also with the limit equation to second order. 

\begin{definition}[Well-preparedness]\label{def:wellprep}
 We call the initial conditions $(y_0,z_0)$ \emph{well-prepared} if they possess a Hilbert expansion.  That is, there exist a collection of unique functions
 $\lims {y^0} 0, \lims {y^0} 1, \dots$ and 
 $\lims {z^0} 0, \lims {z^0} 1, \dots$ for which
 the initial conditions can be expanded as
 \begin{equation}
  y^0 = \lims {y^0} 0 + \eps \lims {y^0} 1 + \O(\eps^2), \quad 
  \text{and} \quad 
  z^0 = \lims {z^0} 0 + \eps \lims {z^0} 1 + \O(\eps^2).
 \end{equation}
Furthermore, we must have
 \begin{align}
  \label{eq:wellprep}
  g(\lims {y^0} 0,\lims {z^0} 0) = 0, \qquad 
  \nabla g(\lims {y^0} 0, \lims {z^0} 0) \cdot \left(\begin{matrix} \lims {z^0} 0 \\ \nabla g(\lims {y^0} 0, \lims {z^0} 0) \cdot \left(\begin{matrix} \lims {y^0} 1 \\ \lims {z^0} 1 \end{matrix}\right) \end{matrix}\right) = 0.
 \end{align}
\end{definition}
\begin{remark}
	A couple of comments regarding the definition of well-prepared initial conditions are in order.
 \begin{itemize}
  \item The well-preparedness property is a necessary condition that the solution to the ODE defined in \eqref{eq:vdp} has a Hilbert expansion given by \eqref{eq:hilbert}. 
  \item Typically, only the first equation in Eqn.\,\eqref{eq:wellprep} is enforced. However, the standard test cases shown in literature, see, e.g., Section 5 in \cite{Bos09}, 
  fulfill this property. (In fact, using \eqref{eq:vdp_lim2} and higher-version thereof will automatically yield the initial conditions used in \cite{Bos09}.)
  \item In \cite{boscarino2017implicit}, Boscarino and collaborators use a more general version of these initial conditions; the initial conditions we are using are called 'well-prepared to order one' in their nomenclature. 
 \end{itemize}

\end{remark}

\section{The multiderivative implicit-explicit (MD-IMEX) method}
\label{sec:imexmd}

We now describe the numerical method proposed in this work, first starting with a definition of the scheme for the class of equations presented in \eqref{eq:vdp}.  Extensions of this method to larger classes of ODEs are discussed in Sec.~\ref{sec:stability}, but much of the notation that we define here remains the same.

To begin, we start with a mesh spacing
\begin{equation}
0 = t^0 < t^1 < t^2 < \dots < t^N = \Tend, \quad t^{n+1} - t^n = \dt, \quad n=0, 1, \dots, N-1,
\end{equation}
of the time domain $[0,\Tend]$.  We seek discrete numerical approximations $y^n \approx y(t^n)$ and $z^n \approx z(t^n)$ to the exact solutions $y(t)$ and $z(t)$ of \eqref{eq:vdp} at each time point $t=t^n$.  For the sake of exposition, we restrict our attention to uniform time steps, but this work can certainly be extended to a non-uniform (or adaptive) time grid.

One well known method for updating the solution to this problem would be to apply the Trapezoidal rule to approximate the integral of the right hand side to produce a second-order solver via:
\begin{align}
y^{n+1} &:= y^n + \frac{\dt}{2} \left( z^n + z^{n+1} \right) 
	\approx y^n + \int_{t^n}^{t^{n+1}} y'(t)\,\Dt = y^n + \int_{t^n}^{t^{n+1}} z(t)\,\Dt, \\
z^{n+1} &:= z^n + \frac{\dt}{2 \eps} \left( g^n + g^{n+1} \right) 
	\approx z^n + \int_{t^n}^{t^{n+1}} z'(t)\,\Dt = z^n + \int_{t^n}^{t^{n+1}} \frac{g(y(t),z(t))}{\eps}\,\Dt,
\end{align}
where $g^n := g( y^n, z^n )$, for $n=0,1,\dots, N$.  A lesser well-known strategy is to use a fourth-order integral approximation to the right hand side that makes use of not only the first, but also the second derivative of the right hand side:
 	\begin{align}
y^{n+1} &:= y^n + \frac{\dt}{2} \left(z^n + z^{n+1} \right) + \frac{\dt^2}{12 \eps}\left(g^n-g^{n+1} \right)
	\approx y^n + \int_{t^n}^{t^{n+1}} z(t)\,\Dt, \\
z^{n+1} &:= z^n + \frac{\dt}{2\eps} \left( g^n + g^{n+1} \right) + \frac{\dt^2}{12\eps} \left( {\dot g^n -\dot g^{n+1}} \right)
	\approx z^n + \int_{t^n}^{t^{n+1}} \frac{g(y(t),z(t))}{\eps}\,\Dt,
\end{align}
where the total time derivative of the right hand side of $z$ is defined as
\begin{equation}
{\dot g}^{n} := \nabla g^n \cdot \left( \begin{matrix} z^n \\ \frac 1 \eps  g^n \end{matrix} \right), \quad n=0,1,2,\dots,N.
\end{equation}
Of course neither of these methods are semi-implicit.  Not only that, but we also seek a high-order method.  Therefore, we start with the latter of these two, but we make a modification to the solver so that it becomes a semi-implicit, rather than a fully implicit solver.

Our proposed method is as follows.
\begin{algorithm}\label{alg:mdvdp}
 For the solution of \eqref{eq:vdp}, we propose the following semi-implicit iterative IMEX method to advance the solution from time $t = t^n$ to time $t = t^{n+1}$:
 \begin{enumerate}
 	\item \textbf{Predict.}  Given the solution $(y^n,z^n)$, we compute a second-order IMEX Taylor approximation
 	\begin{align} 
 	\alg y 0 &:= y^n + \dt z^n + \frac{\dt^2}{2\eps} \underbrace{g(y^n,z^n)}_{=:g^n}, \\
 	\alg z 0 &:= z^n + \frac{\dt}{\eps} \underbrace{g(\alg y 0,\alg z 0)}_{=:\alg g 0} - \frac{\dt^2}{2\eps} \underbrace{\nabla g(\alg y 0, \alg z 0) \cdot \left(\begin{matrix} \alg z 0 \\ \frac 1 \eps g(\alg y 0, \alg z 0) \end{matrix}\right)}_{:= \alg {\dot g} 0}
 	\end{align}
 	for the unknowns $\alg y 0$ and $\alg z 0$ that are our initial guesses for an approximation to $y(t^{n+1})$ and $z(t^{n+1})$.  Note that this discretization is based upon a second-order \emph{forward} Taylor series in $y$ and a second-order \emph{backward} Taylor series in $z$.  (In due course, we show that the presence of the implicit second order terms is important in the asymptotic analysis of the method.)
 	\item \textbf{Correct.}  Based on this initial step, for $0 \leq k \leq k_{\max}-1$ we solve
 	\begin{align}
 	\alg y {k+1} &:= y^n + \frac{\dt}{2} \left(z^n + \alg z k\right) + \frac{\dt^2}{12 \eps}\left(g^n-\alg g k \right), \\
 	\alg z {k+1} &:= z^n + \frac{\dt}{\eps} \left( \alg g {k+1} - \alg g k\right) - 
 	\frac{\dt^2}{2\eps} \left(\alg {\dot g} {k+1} - \alg {\dot g} {k} \right)
 	+ \frac{\dt}{2\eps} \left( g^n + \alg g k\right) + \frac{\dt^2}{12\eps} \left(\alg {\dot g^n -\dot g} k \right),
 	\end{align}
 	for $\alg y {k+1}$ and $\alg z {k+1}$.  Note that for ease of notation, we define
 	\begin{align}
 	\alg g k := g( \alg y k, \alg z k), \quad \text{and} \quad 
 	\alg {\dot g} {k} := \nabla \alg g k \cdot \left( \begin{matrix} \alg z k \\ \frac 1 \eps \alg g k \end{matrix} \right), \quad k=0,1,\dots k_{\max}.
 	\end{align}
%
 	\item \textbf{Update.}  The update for the solution is defined as
 	\begin{align*}
 	y^{n+1} := \alg y {k_{\max}}, \qquad z^{n+1} := \alg z {k_{\max}}.
 	\end{align*}
 \end{enumerate}

\end{algorithm}

Consistency and stability are of central importance for any numerical discretization of a differential equation.  Furthermore, for aymptotic-preserving (AP) schemes, the asymptotic stability and accuracy (as $\eps \to 0$) are of paramount import, as these are the defining features of any AP numerical solver.  We analyze the latter two central properties in the forthcoming sections, but first we address the consistency of the numerical method by looking at the order of accuracy of the solver (as a fixed function of $\eps > 0$) and letting $\dt \to 0$.  Stability is investigated in the numerical results section where we consider a prototypical linear case after defining the appropriate extension of this solver to problems with an additive right hand side.

\begin{remark} \label{rmk:consistency}
 In every iteration step, $\alg y k$ and $\alg z k$ are approximations to $y(t^{n+1})$ and $z(t^{n+1})$, respectively, of order $\min\{4,2 + k\}$.  That is, the iterates pick up a single order of an order of accuracy with each sweep up the solver, up to a maximal order based on the underlying quadrature rule.
\end{remark}

We formalize the statement of Rmk.\,\ref{rmk:consistency} in Thm.\,\ref{thm:consistency} but we first lay down the foundational ingredients for its proof.  As this method is based on the integral formulation of the differential equation, we begin with some lesser well known quadrature identities.  Define, for some generic function $f: \R^2 \rightarrow \R$,
\begin{align} 
\I\left[f^n, \alg f k\right] := \frac{\dt}{2} \left(f^n + \alg f k\right) + \frac{\dt^2}{12}\left({\dot f^n}-\alg {\dot f} k\right),
\end{align}
with the obvious notation $f^n := f(y^n, z^n)$ and $\alg f k := f(\alg y k, \alg z k)$.  Note that $\I$ is a fourth-order accurate quadrature rule, and therefore
\begin{align} 
\label{eqn:IzInt}
\I\left[z(t^n), z(t^{n+1})\right] &= \int_{t^n}^{t^{n+1}} z(t)\, \Dt + \O(\dt^5), \quad \text{and} \\
\label{eqn:IgInt}
\I\left[g(t^n), g(t^{n+1})\right] &= \int_{t^n}^{t^{n+1}} g(y(t),z(t))\, \Dt + \O(\dt^5),
\end{align}
assuming enough regularity in the underlying $y,z$, and $g$ functions that define \eqref{eq:vdp}.  (The constants in the big-$\O$ estimate do of course depend on $\eps > 0$.)  For the sake of readability, we have made the slight abuse of notation and are thinking of $g(t^n) := g(y(t^n),z(t^n))$.  Furthermore, observe that the defining equations for $\alg y k$, $\alg z k$, respectively, with $k > 0$ in the correction step can then be written as 
\begin{align}
\alg y {k+1} &= y^n + \I\left[z^n, \alg z k\right], \quad \text{and} \\
\alg z {k+1} &= z^n + \frac{\dt}{\eps} \left( \alg g {k+1} - \alg g k\right) - 
\frac{\dt^2}{2\eps} \left(\alg {\dot g} {k+1} - \alg {\dot g} {k} \right)
+ \frac 1 \eps \I\left[g^n, \alg g k\right],
\end{align}
with the understanding that $\dot z := \frac {g}{\eps}$, which is required to compute $\I\left[z^n, \alg z k\right]$.

As is customary in a consistency analysis, assume that $y^n$ and $z^n$ are the exact solutions evaluated at time $t^n$.  That is, we assume $y^n = y(t^n)$ and $z^n = z(t^n)$.  Define
\begin{equation}
\alg {\delta_y} k := \alg y k - y(t^{n+1}), \qquad \alg {\delta_z} k := \alg z k - z(t^{n+1}), \qquad \text{and} \qquad 
\alg \delta k := \|(\alg {\delta_y} k, \alg {\delta_z} k)\|.
\end{equation}
Note that
\begin{equation}
\label{eqn:Iz}
\begin{aligned} 
\left|\I\left[z(t^n),z(t^{n+1})\right] - \I\left[z(t^n), \alg z k\right]\, \right| 
&= \left| \frac{\dt}{2} \left(z( t^{n+1} ) - \alg z k\right) + \frac{\dt^2}{12}\left({\dot z(t^{n+1})}-\alg {\dot z} k\right) \right| \\
&\leq\frac{\dt}{2}\left| (z(t^{n+1})-\alg z k)\right| + \frac{\dt^2}{12\eps }\left|g(t^{n+1}) - \alg g k \right| \\
&\leq \frac{\dt}{2} \alg \delta k + \frac{\dt^2}{12\eps} L_g \alg \delta k,
\end{aligned}
\end{equation}
where $L_g$ is the Lipschitz constant for $g$, and similarly 
\begin{equation}
\label{eqn:Ig}
\left| \I\left[g(t^n), g(t^{n+1})\right] - \I\left[g(t^n), g^{[k+1]}\right] \right|
\leq \frac \dt 2 L_g \alg \delta k + \frac{\dt^2}{12 \eps} L_{\dot g} \alg \delta k,
\end{equation}
where $L_{\dot g}$ is the Lipschitz constant for $\dot{g}$.  Since the exact solution of the differential equation satisfies
\begin{equation} 
z(t^{n+1}) = z(t^n) + \frac 1 \eps \int_{t^n}^{t^{n+1}} g(y,z) \Dt, 
\end{equation}
we have
\begin{align*} 
\left| \alg {\delta_z} {k+1} \right| &= 
\left| z(t^n) + \frac{\dt}{\eps} \left( \alg g {k+1} - \alg g k\right) - 
\frac{\dt^2}{2\eps} \left(\alg {\dot g} {k+1} - \alg {\dot g} {k} \right)
+ \frac 1 \eps \I\left[g^n, \alg g k\right] - z(t^n) - \frac 1 \eps \int_{t^n}^{t^{n+1}} g(y,z) \Dt \right| \\
&\leq \frac{\dt}{\eps} \underbrace{\left| g^{[k+1]} - g^{[k]} \right| }_{\bf I} + 
\frac{\dt^2}{2 \eps} \underbrace{\left| \dot{g}^{[k+1]} - \dot{g}^{[k]} \right|}_{\bf II} +
\frac{1}{\eps} \underbrace{\left| \I\left[g^n, \alg g k\right]  - \int_{t^n}^{t^{n+1}} g(y,z) \Dt \right|}_{\bf III}.
\end{align*}
We estimate each of these terms separately:
\begin{equation}
|{\bf I}| = \left| g^{[k+1]} - g^{[k]} \right| \leq 
\left| g^{[k+1]} - g^{n+1} \right| + \left| g^{n+1} - g^{[k]} \right|
 \leq  L_g \left| \alg \delta {k+1} \right| + L_g \left| \alg \delta {k} \right|,
\end{equation}
and
\begin{align}
|{\bf II}| &= \left| \alg {\dot{g}}{{k+1}} - \alg {\dot{g}}{k} \right| =
\left| \dot{g}^{[k+1]} - \dot{g}^{n+1} + \dot{g}^{n+1} - \dot{g}^{[k]} \right|
\leq 
\frac{L_{\dot{g}}}\eps  \alg \delta{k+1} + \frac{L_{\dot{g}}}\eps \alg \delta {k} .
\end{align}
Finally, we make use of \eqref{eqn:Ig} and \eqref{eqn:IgInt} to estimate the third term in this inequality:
\begin{equation}
\begin{aligned}
\left| {\bf III} \right| &= \left| \I\left[g^n, \alg g k\right]  - \int_{t^n}^{t^{n+1}} g(y,z) \Dt \right|
\leq \left| \I\left[g^n, \alg g k\right]  - \I\left[ g^n, g^{n+1} \right] \right| +
\left| \I\left[ g^n, g^{n+1} \right] - \int_{t^n}^{t^{n+1}} g(y,z) \Dt \right| \\
&\leq \frac \dt 2 L_g \alg \delta k  + \frac{\dt^2}{12 \eps} L_{\dot g} \alg \delta k  + \O( \dt^5 ).
\end{aligned}
\end{equation}
All together, we have
\begin{equation}
\label{eqn:deltaz-estimate}
\begin{aligned}
\left| \alg {\delta_z} {k+1} \right| &\leq \frac{\dt}{\eps} \left| {\bf I} \right| + \frac{\dt^2}{2 \eps} \left| {\bf II} \right| + \frac1 \eps \left| {\bf III} \right|  \\
&\leq
\frac{L_g \dt}{\eps} \alg \delta {k+1} + \frac{L_g \dt}{\eps} \alg \delta {k} 
+ \frac{L_{\dot g}\dt^2}{2\eps^2} \alg \delta {k+1} + \frac{L_{\dot g}\dt^2}{2\eps^2} \alg \delta {k}
+\frac \dt {2 \eps} L_g \alg \delta k  + \frac{\dt^2}{12 \eps^2} L_{\dot g}  \alg \delta k  + \O( \dt^5 ) \\
&= \O(\dt \alg \delta {k+1}) + \O(\dt \alg \delta k) + \O(\dt^5).
\end{aligned}
\end{equation}
%
Note again that the constants in the $\O$-terms depend on $\eps$.  Similar results hold for $\alg {\delta_y} {k+1}$, which show that
\begin{equation}
\label{eqn:deltay-estimate}
\left| \alg {\delta_y} {k+1} \right| \leq \O(\dt \alg \delta {k+1}) + \O(\dt \alg \delta k) + \O(\dt^5).
\end{equation}
These results indicate that $\alg \delta {k+1}$ is one order (in $\dt$) better than $\alg \delta {k}$, until it reaches the maximum order of the quadrature rule.  We formalize this statement in the following theorem.
\begin{theorem}\label{thm:consistency}
The errors in the iterated approximations defined in Algorithm \ref{alg:mdvdp} satisfy $\alg \delta k = \O(\dt^{\min\{5,2+k\}+1})$, with any $k \in \Z_{\geq 0}$, and therefore when $k_{\max} \geq 2$, the method is fourth-order consistent.
\end{theorem}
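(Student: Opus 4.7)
My plan is to prove the error estimate by induction on the corrector index $k$, driven by the two local recursions \eqref{eqn:deltaz-estimate} and \eqref{eqn:deltay-estimate} that have already been derived just above the theorem. Taking the norm of those component bounds yields, with constants that depend on $\eps$ through Postulate \ref{pos:lipschitz} but are independent of $k$ and $\dt$,
\begin{equation*}
\alg \delta {k+1} \;\leq\; C_1\,\dt\,\alg \delta {k+1} \;+\; C_2\,\dt\,\alg \delta k \;+\; C_3\,\dt^5.
\end{equation*}
For $\dt$ small enough that $C_1\dt < \tfrac12$, the first term on the right is absorbed into the left-hand side, leaving
\begin{equation*}
\alg \delta {k+1} \;\leq\; \tilde C_2\,\dt\,\alg \delta k \;+\; \tilde C_3\,\dt^5.
\end{equation*}
This is the ``one-step gain'' that the induction exploits: each corrector sweep picks up one extra power of $\dt$ in the error, until the $\dt^5$ floor from the quadrature rule $\I$ is reached.

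The base case $k=0$ is handled separately. For $\alg y 0$ this is immediate: the predictor is an explicit two-derivative forward Taylor polynomial, so matching term-by-term against the Taylor expansion of $y(t^{n+1})$ around $t^n$, using $y'=z$ and $y''=g/\eps$ from Lemma \ref{la:sndyz}, yields an $\O(\dt^3)$ truncation error directly. For $\alg z 0$ the update is implicit, so the argument is less trivial. I would compare the scheme against the \emph{backward} two-derivative Taylor expansion of $z(t^{n+1})$ about $t^{n+1}$, again using $z'=g/\eps$ and $z''=\dot g/\eps$, producing a defect of order $\dt^3$, and then convert this defect into a bound on $\alg z 0 - z(t^{n+1})$ via the Lipschitz bounds of Postulate \ref{pos:lipschitz} together with a contraction (fixed-point) argument. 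This argument both certifies that $\alg z 0$ is well defined for $\dt$ small enough and delivers $\alg \delta 0 = \O(\dt^3)$.

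With $\alg \delta 0 = \O(\dt^3)$ in hand, the recursion yields $\alg \delta 1 = \O(\dt^4)$, $\alg \delta 2 = \O(\dt^5)$, and every subsequent iterate remains $\O(\dt^5)$ — this saturation at the accuracy of the underlying quadrature rule $\I$ is exactly the source of the $\min\{5, 2+k\}+1$ exponent in the theorem. Since $\alg \delta {k_{\max}}$ is the local truncation error per step, a value of $k_{\max}\geq 2$ gives an $\O(\dt^5)$ local error, and summation over $\O(1/\dt)$ time steps delivers fourth-order global consistency, which concludes the proof.

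The step I expect to be most delicate is making the absorption uniform over $k$: the constants $C_1,C_2,C_3$ carry explicit $1/\eps$ and $1/\eps^2$ factors visible in \eqref{eqn:deltaz-estimate}, so the smallness requirement on $\dt$ is really something like $\dt\ll\eps^2$. This is acceptable here, where $\eps>0$ is held fixed and only $\dt\to 0$, but it is precisely the reason a separate argument — not an invocation of Thm.\,\ref{thm:consistency} — is needed for the asymptotic-preserving analysis in Sec.\,\ref{sec:ac}, where no such scaling of $\dt$ with $\eps$ may be assumed.
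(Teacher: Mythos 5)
Your proof follows essentially the same route as the paper's: the base case $\alg \delta 0 = \O(\dt^3)$ from the forward/backward Taylor predictor, followed by induction on $k$ using the recursion $\alg \delta {k+1} = \O(\dt\,\alg \delta {k+1}) + \O(\dt\,\alg \delta k) + \O(\dt^5)$ established in \eqref{eqn:deltaz-estimate}--\eqref{eqn:deltay-estimate}. The details you supply --- absorbing the implicit $\dt\,\alg \delta {k+1}$ term for $\dt$ small, the fixed-point argument certifying the implicit half of the predictor, and the caveat that the hidden constants degrade as $\eps \to 0$ so that this theorem cannot substitute for the asymptotic analysis of Sec.~\ref{sec:ac} --- are precisely the steps the paper's terse proof leaves implicit, and they are correct.
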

  
\begin{proof}
The predictor is second-order accurate because $\alg y 0$ and $\alg z 0$ are computed by a second-order forward/backward Taylor method.  That is, $\alg \delta 0 = \O( \dt^3 )$.  Combining \eqref{eqn:deltaz-estimate} and \eqref{eqn:deltay-estimate} gives
\[
\alg \delta {k+1} = \O(\dt \alg \delta {k+1}) + \O(\dt \alg \delta k) + \O(\dt^5),
\]
which yields the desired result after applying induction on the number of iterates, $k$.

%
%
\end{proof}

\begin{remark}
From the analysis it is evident that once the quadrature operator $\I$ is replaced by another, higher-order quadrature, the method exhibits a higher overall order of accuracy.  This route opens the possibility to investigate even higher order semi-implicit multiderivative time integrators.
\end{remark}

\section{Asymptotic consistency}
\label{sec:ac}

Considering the fact that the algorithm should approximate a singularly perturbed equation, it is evident that the behavior of the algorithm in the limiting case $\eps \to 0$ is of utmost importance. Here, we investigate the asymptotic preserving (AP) property of the proposed method. Roughly speaking, an asymptotic preserving scheme means that the discretization found by sending $\eps \rightarrow 0$ but holding $\dt$ constant is a consistent discretization of the limit equation, Eqn. \eqref{eq:vdp_lim}, found by sending $\eps \to 0$ of the continuous problem.  Generic differential equation solvers do not typically have this property.

Formally, if $w_{{\dt},(\eps)}$ is a discretization of the stiff equations defined in \eqref{eq:vdp}, then there are two limits that can be computed.  We either send $\dt \to 0$ or we can send $\eps \to 0$, from which we send the other variable to zero.  On the one hand, if we first send $\dt \to 0$, then we end up with a (to be expected) numerical approximation $w_{(\eps)}$ of \eqref{eq:vdp}, which we understand relaxes to  $w_{(0)}$ as $\eps \to 0$.  On the other hand, if we instead first send $\eps \to 0$, then we end up with a discretization $w_{\Delta t, (0)}$, which may or may not converge to the limiting solution $w_{(0)}$ as $\dt \to 0$.  If it does, we say the numerical method has the \textit{asymptotic preserving property}.  This property is summarized in Figure \ref{fig:ap}.

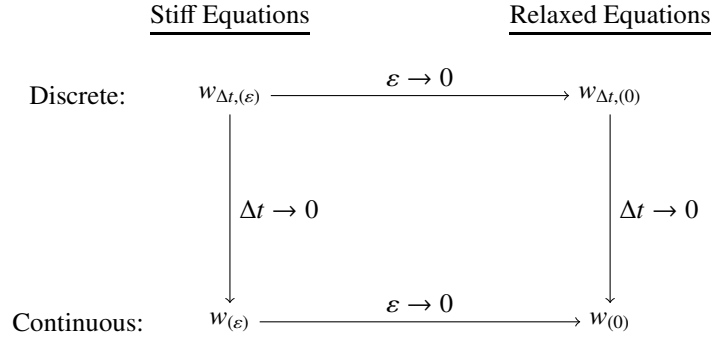
\begin{figure}
\begin{center}
	\begin{tikzpicture}[scale=1.0]
	\node (stiff) at (0,4) {\underline{Stiff Equations}};
	\node (relaxed) at (5,4) {\underline{Relaxed Equations}};
	\node (discrete) at (-2,3) {Discrete:};
	\node (continuous) at (-2,0) {Continuous:};
	\node (feps) at (0,0) {$w_{(\eps)}$};
	\node (f0)   at (5,0) {$w_{(0)}$};
	\node (neps) at (0,3) {$w_{{\dt},(\eps)}$};
	\node (n0)   at (5,3) {$w_{{\dt},(0)}$};
	\draw[->] (feps) -- (f0) node [auto,midway] {$\eps\to 0$};
	\draw[->] (neps) -- (n0) node [auto,midway] {$\eps\to 0$};
	\draw[->] (neps) -- (feps) node [auto,midway] {$\Delta t\to 0$};
	\draw[->] (n0) -- (f0) node [auto,midway] { $\Delta t\to 0$};
	\end{tikzpicture}
	\caption{Asymptotic preserving methods.  We say a method is asymptotic preserving if the limits in the above diagram commute with each other. That is, $\lim_{\dt \to 0} \lim_{\eps\to0} w_{{\dt},(\eps)} = \lim_{\eps\to0} \lim_{\Delta t\to 0} w_{{\dt}, (\eps)} = w_{(0)}$.  This property is not automatically preserved with any arbitrary, but consistent numerical method.
		\label{fig:ap}
	}
\end{center}
\end{figure}

We begin by showing the well-posedness of $\alg y 0$, $\alg z 0$, and the fact that these quantities possess Hilbert expansions. 
\begin{lemma}\label{la:hilbert}
 Assume, in addition to Postulate \ref{pos:lipschitz}, that $\partial_z g$ and $\partial_z \dot g$ are Lipschitz in the second argument, i.e., for all $y, z_1, z_2 \in \R$, we have
 \begin{equation}
  |\partial_z g(y,z_1) - \partial_z g(y,z_2)| \leq L_{\partial_z g} |z_1 -z_2|, \qquad 
  |\partial_z \dot g(y,z_1) - \partial_z \dot g(y,z_2)| \leq \frac{L_{\partial_z \dot g}}{\eps}|z_1 -z_2|,
 \end{equation}
and furthermore assume that all occurring derivatives of $g$ are uniformly bounded, and, in the spirit of \cite[Sec. VI.3]{HaiWan} that
 \begin{align}
\label{eq:hilbertconds}
 g(y^n,z^n) = \O(\eps\dt), \qquad \dot g(y^n,z^n) = \O\left(\frac{\dt}{\eps}\right).
 \end{align}
 If, in addition to these criteria, we assume $y^n$ and $z^n$ possess Hilbert expansions, then there exists a fixed $\eps_0 > 0$ and $\dt_0 > 0$, such that for all $0 < \eps < \eps_0$ and  $0 < \dt < \dt_0$, we have that $\alg y 0$ and $\alg z 0$ possess Hilbert expansions. 
 
\end{lemma}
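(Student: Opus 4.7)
The plan is to treat the two predictors separately: $\alg y 0$ is explicit and its Hilbert expansion follows by direct substitution, while $\alg z 0$ is implicit with singular coefficients and requires a rescaling and an application of the implicit function theorem.

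For $\alg y 0 = y^n + \dt z^n + \frac{\dt^2}{2\eps} g(y^n, z^n)$ it is enough to verify that $\frac{1}{\eps} g(y^n, z^n)$ possesses a Hilbert expansion. Taylor expanding $g$ around the leading terms of the Hilbert series of $y^n$ and $z^n$ produces an $\eps^0$ coefficient equal to $g(\lims{y^n} 0, \lims{z^n} 0)$, which is forced to vanish by the assumption $g(y^n, z^n) = \O(\eps\dt)$; the remainder then divides cleanly by $\eps$ to give a regular power series, and its coefficients are uniformly bounded thanks to the uniform bounds on the derivatives of $g$. Substituting back yields the Hilbert expansion of $\alg y 0$.

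For $\alg z 0$ the defining equation contains singular coefficients $\O(\dt/\eps)$ and $\O(\dt^2/\eps^2)$, so a naive Banach fixed-point iteration in $u = \alg z 0$ does not contract. The key step is to substitute $\dot g(\alg y 0, u) = \partial_y g \cdot u + (\partial_z g)\,g(\alg y 0, u)/\eps$ into the definition and then multiply the whole equation by $\eps^2$; the hidden $1/\eps$ inside $\dot g$ cancels against the explicit $1/\eps$ in front of $\dot g$, leaving a structural product $\partial_z g \cdot g$. The outcome is a smooth relation
\[
F(u, \eps) \;=\; \eps^2(u - z^n) \;-\; \eps\dt\, g \;+\; \tfrac{\eps\dt^2}{2}\,(\partial_y g)\, u \;+\; \tfrac{\dt^2}{2}\,(\partial_z g)\, g \;=\; 0,
\]
where $g$, $\partial_y g$, $\partial_z g$ are evaluated at $(\alg y 0(\eps), u)$ and $F$ is jointly smooth in $(u, \eps)$ up to and including $\eps = 0$, since $\alg y 0$ was already shown to depend smoothly on $\eps$. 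At $\eps = 0$ the equation reduces, modulo the non-vanishing factor $\partial_z g$, to $g(\lims{\alg y 0} 0, u) = 0$, which by Postulate \ref{pos:dzg} has a unique solution $\lims{\alg z 0} 0$. A direct differentiation gives $\partial_u F|_{\eps=0} = \tfrac{\dt^2}{2}(\partial_z g)^2$, bounded away from zero by Postulate \ref{pos:dzg}, so the smooth implicit function theorem produces a unique smooth branch $\alg z 0(\eps)$ defined on $|\eps| < \eps_0$, $\dt \in (0, \dt_0)$, whose Taylor expansion at $\eps = 0$ is the sought Hilbert expansion.

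The main obstacle is locating the correct rescaling: only after the cancellation of the $1/\eps$ contained in $\dot g$ against the $1/\eps$ multiplying $\dot g$ in the definition does the implicit function theorem apply cleanly with a non-degenerate Jacobian at $\eps = 0$. The Lipschitz hypotheses on $\partial_z g$ and $\partial_z \dot g$, together with the uniform boundedness of the higher derivatives of $g$, enter precisely at this stage: they provide quantitative, data-independent control on the thresholds $\eps_0$ and $\dt_0$ and on the coefficients of the resulting Hilbert series, whereas without them existence could be asserted only qualitatively at each fixed positive $\eps$.
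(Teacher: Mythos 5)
Your treatment of $\alg y 0$ matches the paper's (explicitness plus $g(y^n,z^n)=\O(\eps\dt)$). For $\alg z 0$, however, you take a genuinely different route: you multiply the defining equation by $\eps^2$ so that the $\eps^{-1}$ hidden inside $\dot g$ cancels against the explicit one, and you invoke the implicit function theorem at the base point $\eps=0$, where $\partial_u F=\tfrac{\dt^2}{2}(\partial_z g)^2>0$ by Postulate~\ref{pos:dzg}. The paper instead works with the unrescaled residual $F(u)=u-\tfrac{\dt}{\eps}g(\alg y 0,u)+\tfrac{\dt^2}{2\eps}\dot g(\alg y 0,u)-z^n$ and applies the Newton--Kantorovich theorem anchored at $z^n$, using Postulate~\ref{pos:dzg} to obtain $F'(z^n)\geq 1+\tfrac{\dt}{\eps}\alpha+\tfrac{\dt^2}{2\eps^2}\alpha$ and \eqref{eq:hilbertconds} to show the first Newton step has length $\O(\dt)$; the Hilbert expansion of $\alg z 0$ is then obtained by propagating the expansion through each Newton iterate. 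Your version is conceptually cleaner in that the expansion falls out as the Taylor series of the smooth branch, and it makes transparent why Postulate~\ref{pos:dzg} is the decisive hypothesis.

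Two points need attention. First, and most substantively, a qualitative implicit function theorem does not deliver thresholds $\eps_0,\dt_0$ of the uniform form claimed: your Jacobian at the anchor degenerates like $\dt^2$ as $\dt\to0$, so the $\eps$-neighborhood a crude application produces may shrink with $\dt$. To recover uniformity you must retain the full lower bound $\partial_u F\gtrsim \eps^2+\eps\dt+\dt^2$ rather than only its value at $\eps=0$, and pair it with the estimate that the rescaled residual at the anchor is $\O(\eps\dt^2+\eps^2\dt)$; at that point you are effectively redoing the paper's quantitative Newton--Kantorovich bookkeeping, and the closing assertion that the Lipschitz hypotheses ``provide data-independent control'' is not a substitute for carrying it out. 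Second, the implicit function theorem yields a \emph{locally} unique branch through the root $u_*$ of $g(\lims{\alg y 0}{0},\cdot)$ (whose existence and uniqueness deserve a sentence: both follow from $\partial_z g<-c$); you still owe an argument identifying this branch with the root the scheme actually computes, e.g.\ by noting that $z^n$ lies within $\O(\dt+\eps)$ of $u_*$ and that $F(\cdot,\eps)$ has no other zero in that ball. The paper's choice of $z^n$ as the Newton starting point, together with the conclusion $\alg z 0-z^n=\O(1)$, handles both issues simultaneously.
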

\begin{remark}
 Under the assumption that there exists a Hilbert expansion, one can show that the identities in \eqref{eq:hilbertconds} hold with $\O(\eps)$. Behind this formulation is hence the implicit assumption that $\eps \ll \dt$. 
\end{remark}

\begin{proof}
 Due to the fact that $\alg y 0$ is computed explicitly, and $g(y^n, z^n) = \O(\eps\dt)$, it is evident that $\alg y 0$ possesses a Hilbert expansion, can hence be written as  
 \begin{align*} 
  \alg y 0 = \alg {\lims y 0} 0 +\eps \alg {\lims y 1} 0 + \O(\eps^2). 
 \end{align*}
 More challenging is showing that $\alg z 0$ has a Hilbert expansion, given that this is nonlinear and implicit.  Note that this term is supposed to be a zero of
 \begin{align}
  \label{eq:hilbertint1}
  F(\alg z 0) := \alg z 0 - \frac{\dt}{\eps} g(\alg y 0,\alg z 0) + \frac{\dt^2}{2\eps} {\dot g(\alg y 0,\alg z 0)} - z^n = 0.
 \end{align}
 As typically done, see \cite{HaiWan}, we apply Newton-Kantorovich's theorem to this function $F$. Direct computation gives
 \begin{align*}
  F'(z) = 1 - \frac{\dt}{\eps} \partial_z g(\alg y 0, z)  + \frac{\dt^2}{2\eps} \partial_z \dot g(\alg y 0,z),
 \end{align*}
  and hence 
 \begin{align*}
  |F'(z_1) - F'(z_2)| \leq \left( \frac{\dt}{\eps} L_{\partial_z g} + \frac{\dt^2}{2\eps^2} L_{\partial_z \dot g} \right) |z_1-z_2|.
 \end{align*}
 Furthermore, observe that 
 \begin{align*} 
  \left|g(\alg y 0, z^n)\right| = \left|g\left(y^n + \O(\dt),z^n\right)\right| \leq |g(y^n, z^n)| + \O(\dt) \leq M_g\dt,
 \end{align*}
 for some constant $M_g$, because $g(y^n,z^n) = \O(\eps\dt)$, $\partial_y g$ is bounded and there is some upper bound on $\eps$. 
 Because of our assumption on bounded derivatives of $g$, we also have
 \begin{align*} 
  \partial_{zz} g(\alg y 0,z^n) g(\alg y 0,z^n) \leq M_1\dt
 \end{align*}
 for some $M_1 > 0$. 
 
 Now, consider Newton's method applied to $F$, with initial point $z^n$.   Choose an $M_2$ such that  
 \begin{align*} 
  \|\partial_{yz}g\|_{\infty}|z^n| + \|\partial_y g\|_{\infty}  \leq M_2.
 \end{align*}
 Then, taking into account Postulate \ref{pos:dzg}, we have
 \begin{align*}
  F'(z^n) &=    1 - \frac{\dt}{\eps} {\partial_z g}(\alg y 0,z^n) + \frac{\dt^2}{2\eps} \partial_z \dot g (\alg y 0,z^n) \\
          &\geq 1 + \frac{\dt}{\eps} c + \frac{\dt^2}{2\eps} \left(\partial_{yz} g(\alg y 0,z^n) z^n + \partial_y g(\alg y 0,z^n)\right) + 
          \frac{\dt^2}{2\eps^2} \left(\partial_{zz} g(\alg y 0,z^n) g(\alg y 0,z^n) + (\partial_z g(\alg y 0,z^n))^2\right) \\
          &\geq 1 + \frac{\dt}{\eps} c + \frac{\dt^2}{2\eps^2} c^2 + \frac{\dt^2}{2\eps} \left(\partial_{yz} g(\alg y 0,z^n) z^n + \partial_y g(\alg y 0,z^n)\right)
          + \frac{\dt^2}{2\eps^2} \left(\partial_{zz} g(\alg y 0,z^n) g(\alg y 0,z^n)\right) \\
          &\geq 1 + \frac{\dt}{\eps}\left(c-\frac{\dt}{2}M_2\right) + \frac{\dt^2}{2\eps^2}\left(c^2-\dt M_1\right).
 \end{align*}
 Choosing $\dt_0$ small enough (independently of $\eps$!) so that the expressions in brackets are strictly positive (larger than $\alpha >0$ say) for any $\dt < \dt_0$,  yields that $F'(z^n) \neq 0$ and that 
 \begin{align*}
  F'(z^n) \geq 1 + \frac{\dt}{\eps} \alpha + \frac{\dt^2}{2\eps^2}\alpha.
 \end{align*}

 A similar computation, taking into account $g(y^n,z^n) = \O(\dt)$ and $\dot g(y^n,z^n) = \O\left(\frac{\dt}{\eps}\right)$, yields 
 \begin{align*}
  |F(z^n)| &= \left|z^n - \frac{\dt}{\eps} g(\alg y 0,z^n) + \frac{\dt^2}{2\eps} \dot g(\alg y 0,z^n) - z^n\right| \\
  &\leq M_g\frac{\dt^2}{\eps} + M_{\dot g} \frac{\dt^3}{2\eps^2},
 \end{align*}
 where $M_{\dot g}$ is defined similarly to $M_g$.

 The first Newton step would thus have step width 
 \begin{align*}
  \frac{F(z^n)}{F'(z^n)} \leq \dt \frac{M_g\frac{\dt}{\eps} + M_{\dot g} \frac{\dt^2}{2\eps^2}}{1 + \frac{\dt}{\eps} \alpha + \frac{\dt^2}{2\eps^2}\alpha}.
 \end{align*}
 This expression can be bounded by $\dt$ times a constant $H$ that does not depend on $\eps$ and $\dt$, i.e., 
 \begin{align*}
  \frac{F(z^n)}{F'(z^n)} \leq H \dt.
 \end{align*}
 Hence, there holds 
 \begin{align*} 
  \left( \frac{\dt}{\eps} L_{\partial_z g} + \frac{\dt^2}{2\eps^2} L_{\partial_z \dot g} \right) |F'(z^n)^{-1}| \left|\frac{F(z^n)}{F'(z^n)} \right| 
  \leq \frac{\frac{\dt}{\eps} L_{\partial_z g} + \frac{\dt^2}{2\eps^2} L_{\partial_z \dot g}}{1 + \frac{\dt}{\eps} \alpha + \frac{\dt^2}{2\eps^2}\alpha}H \dt.
 \end{align*}
 Also this can be bounded by some constant times $\dt$, hence, choosing $\dt$ sufficiently small makes the expression smaller than one half, and the Newton-Kantorovich theorem can be used. Not only does this imply that $\alg z 0 - z^n = \O(1)$, it also implies that $\alg z 0$ has a Hilbert expansion, because we can repeat the argument for every Newton step.
 \end{proof}

\begin{remark}
 Please note that statement and proof can be extended to the full method. 
\end{remark}


To continue, we show the AP-property for the forward/backward starting phase. 
\begin{lemma}\label{la:apeuler}
 Assume that $\alg y 0$ and $\alg z 0$ possess Hilbert expansions.  That is, we are operating under the assumptions presented in Lemma \ref{la:hilbert}.
 Then, $\alg y 0$ and $\alg z 0$ are also well-prepared in the sense of Def. \ref{def:wellprep}, i.e., there holds 
 \begin{align*}
  \alg {\lims g 0} 0 := g\left(\alg {\lims y 0} 0, \alg {\lims z 0} 0\right) = 0, \qquad 
  \nabla {\alg {\lims g 0} 0}  \cdot \left(\begin{matrix} \lims {\alg z 0} 0 \\ \nabla \alg {\lims g 0} 0 \cdot \left(\begin{matrix} \lims {\alg y 0} 1 \\ \lims {\alg z 0} 1 \end{matrix}\right) \end{matrix}\right)=0.
 \end{align*}
\end{lemma}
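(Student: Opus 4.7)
The plan is a direct matching of powers of $\eps$ in the implicit equation that defines $\alg z 0$; the explicit equation for $\alg y 0$ imposes no asymptotic constraint, so both well-preparedness identities have to come from the single relation
\[
\alg z 0 = z^n + \frac{\dt}{\eps}\,g(\alg y 0, \alg z 0) - \frac{\dt^2}{2\eps}\,\dot g(\alg y 0, \alg z 0).
\]
First I would rewrite $\dot g = \partial_y g\cdot \alg z 0 + \tfrac{1}{\eps}\,\partial_z g\cdot g(\alg y 0,\alg z 0)$ and multiply through by $\eps^2$, placing every singular factor over a common denominator and yielding
\[
\eps^2(\alg z 0 - z^n) = \eps\dt\, g(\alg y 0,\alg z 0) - \frac{\eps\dt^2}{2}\,\partial_y g(\alg y 0,\alg z 0)\,\alg z 0 - \frac{\dt^2}{2}\,\partial_z g(\alg y 0,\alg z 0)\,g(\alg y 0,\alg z 0).
\]

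Next I would substitute the Hilbert expansions of $\alg y 0$ and $\alg z 0$ (whose existence is guaranteed by Lemma \ref{la:hilbert}) and Taylor-expand $g$, $\partial_y g$ and $\partial_z g$ about $(\alg{\lims y 0}0,\alg{\lims z 0}0)$, then collect powers of $\eps$. At order $\eps^0$ every term on both sides is at least $\O(\eps)$ apart from $-\tfrac{\dt^2}{2}\,\partial_z g\cdot g$, so the balance reduces to
\[
\partial_z g(\alg{\lims y 0}0,\alg{\lims z 0}0)\cdot g(\alg{\lims y 0}0,\alg{\lims z 0}0)=0.
\]
Since Postulate \ref{pos:dzg} guarantees $\partial_z g\le -c<0$, this forces $g(\alg{\lims y 0}0,\alg{\lims z 0}0)=0$, which is the first identity. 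With this in hand, $g(\alg y 0,\alg z 0)=\eps\,\nabla g\cdot(\lims{\alg y 0}1,\lims{\alg z 0}1)^{\top}+\O(\eps^2)$, all derivatives evaluated at $(\alg{\lims y 0}0,\alg{\lims z 0}0)$, so the previously singular term $\tfrac{1}{\eps}\partial_z g\cdot g$ becomes $\O(1)$. Collecting the coefficient of $\eps^1$ then produces
\[
\partial_y g\cdot \lims{\alg z 0}0 \; + \; \partial_z g\cdot\nabla g\cdot\begin{pmatrix} \lims{\alg y 0}1 \\ \lims{\alg z 0}1 \end{pmatrix}=0,
\]
which is exactly the second well-preparedness identity.

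The main obstacle is purely bookkeeping: the factor $1/\eps$ inside $\dot g$ couples two adjacent Hilbert coefficients of $g(\alg y 0,\alg z 0)$, so the second identity only appears \emph{after} the first has promoted $g(\alg y 0,\alg z 0)$ from $\O(1)$ to $\O(\eps)$. Beyond this one observation, everything reduces to standard Taylor expansion, justified by the uniform derivative bounds assumed in Lemma \ref{la:hilbert}; the explicit predictor for $\alg y 0$ enters only to confirm that its own Hilbert expansion exists.
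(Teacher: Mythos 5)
Your proposal is correct and follows essentially the same route as the paper's proof: both insert the Hilbert expansions into the implicit relation defining $\alg z 0$, identify the leading-order balance $\partial_z g\cdot g=0$ (your order $\eps^0$ after multiplying by $\eps^2$ is the paper's order $\eps^{-2}$), invoke Postulate~\ref{pos:dzg} to conclude $\alg{\lims g 0}0=0$, and then use this to kill the residual terms at the next order so that only the second well-preparedness identity survives. Your explicit remark that the $\eps^{-1}$ factor inside $\dot g$ couples adjacent Hilbert coefficients is precisely the point the paper flags as ``the term scaled with $\eps^{-2}$ also contributes.''
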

\begin{proof}
 The proof starts by considering $\alg z 0$, given by 
 \begin{align*}
  \alg z 0 &:= z^n + \frac{\dt}{\eps} \alg g 0 - \frac{\dt^2}{2\eps} \nabla {\alg g 0}   \cdot \left(\begin{matrix} \alg z 0 \\ \frac 1 \eps \alg g 0 \end{matrix} \right). 
 \end{align*}
 Inserting a Hilbert expansion for all occurring quantities reveals the fact that the highest order is $\O(\eps^{-2})$, with corresponding equation being given by
 \begin{align*}
  \partial_z \alg {\lims g 0} 0  \alg {\lims g 0} 0 = 0.
 \end{align*}
 Due to our assumption on $\partial_z g$, see Postulate \ref{pos:dzg}, there follows $\alg {\lims g 0} 0 = 0$. 
 
 Now, to $\O(\eps^{-1})$, the limiting equations are (be aware that the term scaled with $\eps^{-2}$ also contributes, see Eqn. \eqref{eq:taylor_on_g}) are given by
 \begin{align*}
  \dt \alg {\lims g 0} 0 - \frac{\dt^2}{2} \nabla {\lims {\alg g 0} 0} \cdot \left(\begin{matrix} \lims {\alg z 0} 0 \\ \nabla \lims {\alg g 0} 0 \cdot \left(\begin{matrix} \lims {\alg y 0} 1 \\ \lims {\alg z 0} 1 \end{matrix} \right)\end{matrix}\right)
  - \frac{\dt^2}{2} \nabla (\partial_z \alg {\lims g 0} 0) \cdot \left(\begin{matrix} \lims {\alg y 0} 1 \\ \lims {\alg z 0} 1 \end{matrix}\right) \cdot \lims {\alg g 0} 0 = 0.
 \end{align*}
 Exploiting the fact that $\lims {\alg g 0} 0 = 0$ yields the claim. 
\end{proof}

\begin{theorem}[The algorithm is AP]
 Assume that the discrete solution at time level $t = 0$ is well-prepared in the sense of Def.~\ref{def:wellprep}. 
 Assume furthermore that the discrete solution possesses a Hilbert expansion. 
 Then, there holds for all times $t^n$ that 
 \begin{align*}
  g(\lims {y^n} 0,\lims {z^n} 0) = 0
 \end{align*}
 and 
 \begin{align*}
  \nabla g(\lims {y^n} 0, \lims {z^n} 0) \cdot \left(\begin{matrix} \lims {z^n} 0 \\ \nabla g(\lims {y^n} 0, \lims {z^n} 0) \cdot \left(\begin{matrix} \lims {y^n} 1 \\ \lims {z^n} 1 \end{matrix}\right) \end{matrix}\right) = 0.
 \end{align*}
 This implies that the method is asymptotic preserving. 
\end{theorem}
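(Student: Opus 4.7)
The natural approach is a double induction: an outer induction on the time level $n$ and, for each fixed $n$, an inner induction on the corrector index $k$. The outer base case $n=0$ is provided by hypothesis. For the outer inductive step, assume that $(y^n, z^n)$ is well-prepared in the sense of Definition \ref{def:wellprep} and admits a Hilbert expansion; the goal is to show that $(y^{n+1}, z^{n+1}) = (\alg y {k_{\max}}, \alg z {k_{\max}})$ satisfies the same two identities of Eqn.\,\eqref{eq:wellprep}.

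The inner induction runs over $k$. The base case $k = 0$ is precisely the content of Lemma \ref{la:apeuler}, which already establishes both well-preparedness identities for the predictor $(\alg y 0, \alg z 0)$. For the inductive step $k \to k+1$ the strategy is to mirror that proof. First I would substitute Hilbert expansions for every quantity appearing in the corrector update. The equation for $\alg y {k+1}$ is benign because the potentially singular term $\frac{\dt^2}{12\eps}(g^n - \alg g k)$ is in fact $\O(\dt^2)$, using the outer and inner hypotheses $\lims{g^n} 0 = 0$ and $\alg{\lims g k} 0 = 0$. The nontrivial equation is the one for $\alg z {k+1}$: it carries $\O(\eps^{-2})$ contributions through $\frac{\dt^2}{2\eps}\alg{\dot g}{k+1}$, $\frac{\dt^2}{2\eps}\alg{\dot g}{k}$, and the total-derivative pieces hidden inside $\frac{\dt^2}{12\eps}(\dot g^n - \alg{\dot g}{k})$.

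At $\O(\eps^{-2})$, expanding each occurrence of $\dot g$ as $\partial_z g \cdot g / \eps + (\text{regular part})$ and using $\alg{\lims g k} 0 = 0$ and $\lims{g^n} 0 = 0$ annihilates all contributions originating from $\alg{\dot g}{k}$ and $\dot g^n$. What remains reduces to $-\frac{\dt^2}{2}\partial_z \alg{\lims g {k+1}} 0 \cdot \alg{\lims g {k+1}} 0 = 0$, whence Postulate \ref{pos:dzg} forces $\alg{\lims g {k+1}} 0 = 0$ and yields the first well-preparedness identity. At $\O(\eps^{-1})$, the identity just established eliminates the cross terms of the form $\partial_z(\cdot) \cdot \alg{\lims g {k+1}} 0$; the surviving contribution reduces, exactly as in the final display of the proof of Lemma \ref{la:apeuler}, to the second well-preparedness identity for $(\alg y {k+1}, \alg z {k+1})$. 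Iterating the inner induction up to $k = k_{\max}$ closes the outer step.

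The main obstacle is the bookkeeping at $\O(\eps^{-1})$: the corrector couples $g^n$, $\alg g k$, $\alg g {k+1}$, and their respective total time derivatives all at once, producing considerably more terms than appear in Lemma \ref{la:apeuler}. One must carefully exploit both inductive hypotheses $\alg{\lims g k} 0 = 0$ and $\lims{g^n} 0 = 0$, together with the corresponding second-order identities analogous to Eqn.\,\eqref{eq:vdp_lim2}, in order to cancel the extraneous pieces and extract the clean well-preparedness identity. Once these cancellations are properly accounted for, the argument is a direct structural extension of the proof of Lemma \ref{la:apeuler}.
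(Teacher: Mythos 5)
Your proposal is correct and follows exactly the route the paper intends: the paper omits the details, stating only that the argument mirrors La.~\ref{la:apeuler} and that the $\eps$-dependent explicit terms $\alg{g}{k}$ and $\alg{\dot g}{k}$ (and $g^n$, $\dot g^n$) reintroduce the well-preparedness expressions of Eqn.~\eqref{eq:wellprep} at previous time and stage instances, which is precisely the bookkeeping your double induction over $n$ and $k$ handles. Your identification of the surviving $\O(\eps^{-2})$ term $-\tfrac{\dt^2}{2}\partial_z \alg{\lims g {k+1}} 0 \cdot \alg{\lims g {k+1}} 0$ and the subsequent use of Postulate~\ref{pos:dzg}, followed by the $\O(\eps^{-1})$ cancellation via the inductive hypotheses, is exactly the intended argument.
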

\begin{proof}
 The proof is very similar to the one of La.~\ref{la:apeuler} and is hence omitted. Note that due to the $\eps-$dependency of the `explicit' terms $\alg {g} k$ and $\alg {\dot g} k$, terms as in Eqn. \eqref{eq:wellprep} at previous time/stage instances do show up. This, contrarily to La.~\ref{la:apeuler}, necessitates the need for well-prepared initial conditions as in Def.~\ref{def:wellprep}. 
\end{proof}

\section{Asymptotic accuracy}
\label{sec:aa}
From a practical point of view, it is not only of interest whether the method is asymptotically \emph{consistent}, but also to what orders the consistency is. This is a more delicate issue than pure consistency; in particular for IMEX Runge-Kutta methods, this leads to rather unwanted results including the stage order of the Runge-Kutta method being a limiting factor, see \cite{Bo07,HaiWan}. In order to investigate this, let us consider van der Pol equation, being in the form \eqref{eq:vdp} with $g$ given by 
\begin{align*}
 g(y,z) = (1-y^2)z - y. 
\end{align*}
We put our initial conditions as
\begin{align*}
 y(0) = 2, \qquad z(0) = -\frac 2 3  + \frac{10}{81} \eps - \frac{292}{2187} \eps^2, 
\end{align*}
which is a frequent choice in literature, see, e.g., \cite{Bos09}. Note that these initial conditions are well-prepared in the sense of Def. \ref{def:wellprep}.

In Fig. \ref{fig:imexvdp}, we plot convergence results for the method presented in Alg. \ref{alg:mdvdp}. On the top-left, we chose $k_{\max} = 0$ (this means that $y^{n+1} = \alg y 0$, similarly for $z$, i.e., only the predictor is taken into account). On the top-right, $k_{\max}$ is set to 2, which is the minimal number of iterations required to produce a fourth-order scheme. 
We observe that the second-order scheme (which is our second-order base IMEX Taylor solver) exhibits no order reduction.  This means that there is second-order convergence uniformly in $\eps$.  The fourth-order scheme shows severe order reduction. On the bottom of Fig.\ \ref{fig:imexvdp}, we increase $k_{\max}$ to 20 and 100, respectively. It is clearly visible that this enhances convergence.  For example, with $k_{\max} = 100$ we observe no order reduction. The reason for this behaviour is that under the assumption that $\left(\alg y k, \alg z k\right)$ converges as $k \rightarrow \infty$,
the result of Alg. \ref{alg:mdvdp} is equal to the fourth-order quadrature rule:
\begin{align*}
 y^{n+1} = y^n + \frac {\dt}{2} \left(z^n + z^{n+1}\right) + \frac{\dt^2}{12\eps}\left(g^n - g^{n+1}\right), \\
 z^{n+1} = z^n + \frac{\dt}{2\eps} \left(g^n + g^{n+1}\right) + \frac{\dt^2}{12 \eps} \left(\dot g^n - \dot g^{n+1}\right), 
\end{align*}
which is apparently less sensitive to order reduction.

\begin{figure}[h]
	\begin{center}
		\begin{tikzpicture}[scale=0.64]
		\begin{loglogaxis}[ymax=1e-2,ymin=1e-16,xlabel={Size of $\dt$},ylabel={Error $e_{\Delta t}$},grid=major,legend style={at={(0.02,0.98)},anchor=north west,font=\footnotesize,rounded corners=2pt}]
		\addplot                              table[skip first n=1, x index = 0, y index =  1] {convergence_table_Taylor2.dat};
		\addplot[mark=triangle*,color=green]  table[skip first n=1, x index = 0, y index =  3] {convergence_table_Taylor2.dat};
		\addplot                              table[skip first n=1, x index = 0, y index =  5] {convergence_table_Taylor2.dat};
		\addplot                              table[skip first n=1, x index = 0, y index =  7] {convergence_table_Taylor2.dat};
		\addplot                              table[skip first n=1, x index = 0, y index =  9] {convergence_table_Taylor2.dat};
		\addplot                              table[skip first n=1, x index = 0, y index = 11] {convergence_table_Taylor2.dat};
		\end{loglogaxis}
		\end{tikzpicture}
		\begin{tikzpicture}[scale=0.64]
		\begin{loglogaxis}[ymax=1e-2,ymin=1e-16,xlabel={Size of $\dt$},ylabel={Error $e_{\Delta t}$},grid=major,legend style={at={(1.12,0.98)},anchor=north west,font=\footnotesize,rounded corners=2pt}]
		\addplot                              table[skip first n=1, x index = 0, y index =  1] {convergence_table.dat};
		\addplot[mark=triangle*,color=green]  table[skip first n=1, x index = 0, y index =  3] {convergence_table.dat};
		\addplot                              table[skip first n=1, x index = 0, y index =  5] {convergence_table.dat};
		\addplot                              table[skip first n=1, x index = 0, y index =  7] {convergence_table.dat};
		\addplot                              table[skip first n=1, x index = 0, y index =  9] {convergence_table.dat};
		\addplot                              table[skip first n=1, x index = 0, y index = 11] {convergence_table.dat};
		\legend{
 			$\eps = 1.0 \times 10^{-1}$,
 			$\eps = 1.0 \times 10^{-2}$,
 			$\eps = 1.0 \times 10^{-3}$,
 			$\eps = 1.0 \times 10^{-4}$,
 			$\eps = 1.0 \times 10^{-5}$,
 			$\eps = 1.0 \times 10^{-6}$,
 		}
		\end{loglogaxis}
		\end{tikzpicture}
		\\
		\begin{tikzpicture}[scale=0.64]
		\begin{loglogaxis}[ymax=1e-2,ymin=1e-16,xlabel={Size of $\dt$},ylabel={Error $e_{\Delta t}$},grid=major,legend style={at={(1.12,0.98)},anchor=north west,font=\footnotesize,rounded corners=2pt}]
		\addplot                              table[skip first n=1, x index = 0, y index =  1] {convergence_table_kmax20.dat};
		\addplot[mark=triangle*,color=green]  table[skip first n=1, x index = 0, y index =  3] {convergence_table_kmax20.dat};
		\addplot                              table[skip first n=1, x index = 0, y index =  5] {convergence_table_kmax20.dat};
		\addplot                              table[skip first n=1, x index = 0, y index =  7] {convergence_table_kmax20.dat};
		\addplot                              table[skip first n=1, x index = 0, y index =  9] {convergence_table_kmax20.dat};
		\addplot                              table[skip first n=1, x index = 0, y index = 11] {convergence_table_kmax20.dat};
		\end{loglogaxis}
		\end{tikzpicture}		
		\begin{tikzpicture}[scale=0.64]
		\begin{loglogaxis}[ymax=1e-2,ymin=1e-16,xlabel={Size of $\dt$},ylabel={Error $e_{\Delta t}$},grid=major,legend style={at={(1.12,0.0)},anchor=south west,font=\footnotesize,rounded corners=2pt}]
		\addplot                              table[skip first n=1, x index = 0, y index =  1] {convergence_table_kmax100.dat};
		\addplot[mark=triangle*,color=green]  table[skip first n=1, x index = 0, y index =  3] {convergence_table_kmax100.dat};
		\addplot                              table[skip first n=1, x index = 0, y index =  5] {convergence_table_kmax100.dat};
		\addplot                              table[skip first n=1, x index = 0, y index =  7] {convergence_table_kmax100.dat};
		\addplot                              table[skip first n=1, x index = 0, y index =  9] {convergence_table_kmax100.dat};
		\addplot                              table[skip first n=1, x index = 0, y index = 11] {convergence_table_kmax100.dat};
 		\legend{
 			$\eps = 1.0 \times 10^{-1}$,
 			$\eps = 1.0 \times 10^{-2}$,
 			$\eps = 1.0 \times 10^{-3}$,
 			$\eps = 1.0 \times 10^{-4}$,
 			$\eps = 1.0 \times 10^{-5}$,
 			$\eps = 1.0 \times 10^{-6}$,
 		}
		\end{loglogaxis}
		\end{tikzpicture}		
		\caption{Convergence results for van der Pol equation with different values of $\eps$. Top left: $k_{\max} = 0$, which amounts to only taking the predictor. Top right: $k_{\max} = 2$, with amounts to taking the full fourth-order scheme. Bottom: $k_{\max}=20$ and $k_{\max}=100$, respectively. Error measure is defined as the Euclidean norm of the $y$ and $z$ error at end time $\Tend = 0.5$.}\label{fig:imexvdp}
	\end{center}
\end{figure}

To investigate the loss of asymptotic order of accuracy numerically in some closer detail, we define $\delta(\dt;\eps)$ to be the Euclidean norm of the error in $y$ and $z$ at end time $\Tend$ for a given $\dt$ and a given $\eps$, i.e., 
\begin{align*}
 \delta(\dt;\eps) := \sqrt{\left(y^N-y(\Tend)\right)^2 + \left(z^N-z(\Tend)\right)^2}. 
\end{align*}
(Note that $\delta$ does of course also depend on $k_{\max}$, which we have not made explicit.) As for the solution, a Hilbert expansion of $\delta$ in terms of $\eps$ is assumed, so 
\begin{align}
 \label{eq:deltaexp}
 \delta(\dt,\eps) = \delta_0(\dt) + \eps \delta_1(\dt) + \eps^2 \delta_2(\dt) + \O(\eps^3). 
\end{align}
We approximate $\delta_0(\dt)$ and $\delta_1(\dt)$ numerically through 
\begin{align}
\label{eq:deltaexpnum}
 \delta_0 \approx \frac{\delta(\dt; \alpha \eps) - \alpha \delta(\dt;\eps)}{1-\alpha}, \qquad
 \eps \delta_1 \approx \omega_1 \delta(\dt; \eps) + \omega_2 \delta(\dt; \alpha \eps) + \omega_3 \delta(\dt; \alpha^2 \eps)
\end{align}
where we choose the rather arbitrary values $\alpha = \frac 5 6$ and $\eps = \alpha^2 \cdot 10^{-5}$.  (In our numerical testing, we verified that the results obtained are not influenced to any significant accuracy by this choice of $\eps$.)
The weights $\omega_i$ are chosen so that 
\begin{align*}
 \omega_1 + \omega_2 + \omega_3 = 0, \quad \omega_1 + \alpha \omega_2 + \alpha^2 \omega_3 = 1, \quad \omega_1 + \omega_2 \alpha^2 + \omega_3 \alpha^4 = 0.
\end{align*}
These conditions on the $\omega_i$ come out naturally after inserting the expansion \eqref{eq:deltaexp} into \eqref{eq:deltaexpnum}.

For the same test case as above (i.e., van der Pol's problem with $k_{max} = 0, 2, 20, 100$, respectively) we plot $\delta_0(\dt)$ and $\delta_1(\dt)$ in Fig.\ \ref{fig:imexvdpasympt}.  We remark that it is the contribution of $\delta_1(\dt)$ that is responsible for the degredation in the order of the solver.  It can be seen that the slope of $\delta_1(\dt)$ increases as $k_{\max}$ increases, until machine accuracy issues occur.

\begin{figure}[h]
	\begin{center}
	\begin{tikzpicture}[scale=0.64]
		\begin{loglogaxis}[ymax=1e-0,ymin=1e-16,xlabel={Size of $\dt$},ylabel={$\delta(\dt) = \delta_0(\dt) + \eps \delta_1(\dt) + \O(\eps^2)$},grid=major,legend style={at={(0.02,0.98)},anchor=north west,font=\footnotesize,rounded corners=2pt}]
		\addplot                              table[x index = 0, y index = 1] {convergence_table_asymptotic_Taylor2.dat};
		\addplot                              table[x index = 0, y index = 2] {convergence_table_asymptotic_Taylor2.dat};
		\legend{$\delta_0(\dt)$, $\delta_1(\dt)$}
		\end{loglogaxis}
	\end{tikzpicture}	\begin{tikzpicture}[scale=0.64]
		\begin{loglogaxis}[ymax=1e-2,ymin=1e-16,xlabel={Size of $\dt$},grid=major,legend style={at={(0.02,0.98)},anchor=north west,font=\footnotesize,rounded corners=2pt}]
		\addplot                              table[x index = 0, y index = 1] {convergence_table_asymptotic.dat};
		\addplot                              table[x index = 0, y index = 2] {convergence_table_asymptotic.dat};
		\legend{$\delta_0(\dt)$, $\delta_1(\dt)$}
		\end{loglogaxis}
	\end{tikzpicture}
	\\
	\begin{tikzpicture}[scale=0.64]
		\begin{loglogaxis}[ymax=1e-2,ymin=1e-16,xlabel={Size of $\dt$},ylabel={$\delta(\dt) = \delta_0(\dt) + \eps \delta_1(\dt) + \O(\eps^2)$},grid=major,legend style={at={(0.02,0.98)},anchor=north west,font=\footnotesize,rounded corners=2pt}]
		\addplot                              table[x index = 0, y index = 1] {convergence_table_asymptotic_kmax20.dat};
		\addplot                              table[x index = 0, y index = 2] {convergence_table_asymptotic_kmax20.dat};
		\legend{$\delta_0(\dt)$, $\delta_1(\dt)$}
		\end{loglogaxis}
	\end{tikzpicture}	
		\begin{tikzpicture}[scale=0.64]
		\begin{loglogaxis}[ymax=1e-2,ymin=1e-16,xlabel={Size of $\dt$},grid=major,legend style={at={(0.02,0.98)},anchor=north west,font=\footnotesize,rounded corners=2pt}]
		\addplot                              table[x index = 0, y index = 1] {convergence_table_asymptotic_kmax100.dat};
		\addplot                              table[x index = 0, y index = 2] {convergence_table_asymptotic_kmax100.dat};
		\legend{$\delta_0(\dt)$, $\delta_1(\dt)$}
		\end{loglogaxis}
	\end{tikzpicture}	
	\caption{Asymptotic convergence results for van der Pol equation. Top left: $k_{\max} = 0$, which amounts to only taking the predictor. Top right: $k_{\max} = 2$, which is the minimum required number of iterations to obtain the the full fourth-order scheme. Bottom: $k_{\max}=20$ and $k_{\max}=100$, respectively. Error measure is defined as the Euclidean norm of the $y$ and $z$ error at end time $\Tend = 0.5$.}\label{fig:imexvdpasympt}\label{fig:imexbdf2}
	\end{center}
\end{figure}
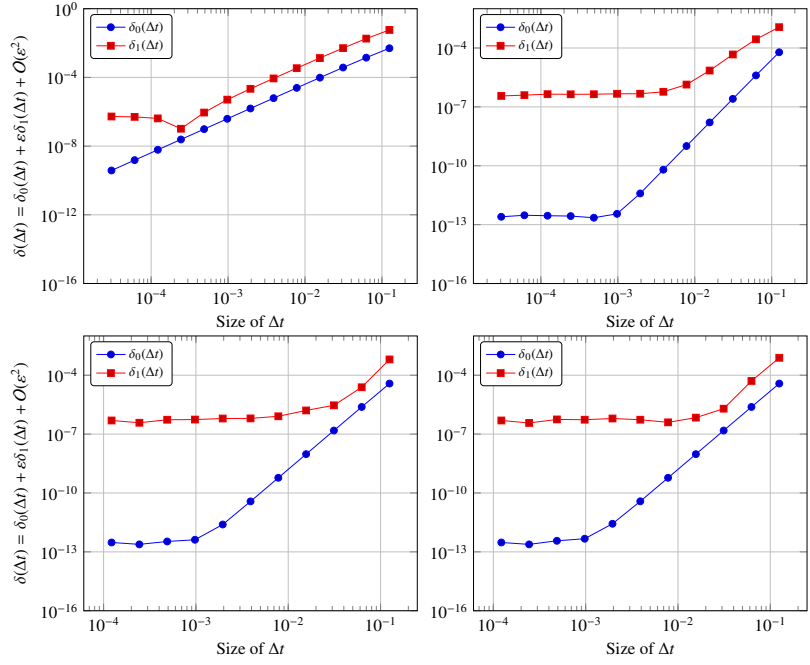

\section{Extensions and stability}
\label{sec:stability}
\subsection{Extending the method to arbitrary splittings}
 
So far, we have discussed the method for equations of type Eqn.\, \eqref{eq:vdp}, and we have proposed a solver for that class of equations in Algorithm \ref{alg:mdvdp}.  The method developed in this work has a natural extension to a larger class of ODEs, which we now point out. Consider, for example a system of ODEs with a generic additive right hand side:
\begin{align}
 \label{eq:ode}
 w'(t) = \Phi(w) =: \Phie(w) + \Phii(w),
\end{align}
with $w: \R^{\geq 0} \rightarrow \R^n$ and $\Phie, \Phii: \R^n \rightarrow \R^n$ being a splitting of the right hand side function.  It is assumed that $\Phii(w)$ contains ``stiff" terms, and $\Phie(w)$ contains non-stiff terms, so $\Phii(w)$ should be treated implicitly, while $\Phie(w)$ can be treated explicitly in order to speed up the computations. Note that Eqn.\ \eqref{eq:vdp} is of this form, with $\Phii(w) = (0, \frac1 \eps g(w))$ and $\Phie(w) = (z,0)$.  The choice of a suitable splitting is a subtle issue.  We refer the reader to \cite{SKN15} for other splittings. 

We extend Algorithm \ref{alg:mdvdp} in the following fashion to address arbitrary splittings. One of the chief goals is to retain the implicit-explicit (IMEX) type flavor of the underlying ODE and reach higher orders of accuracy all the while keeping the implicit solves as simple as possible. 
Note that the total time derivative of each piece in the right hand side function is given by
\begin{align*} 
 \dot \Phii(w) = \Phii'(w) \left(\Phii(w) + \Phie(w)\right), \qquad \dot \Phie(w) = \Phie'(w) \left(\Phii(w) + \Phie(w)\right).
\end{align*}

\begin{algorithm}\label{alg:mdode}
Consider a differential equation with a split right hand side given by Eqn.\,\eqref{eq:ode}.  To advance the solution from time level $t = t^n$ to $t = t^{n+1}$ we perform the following predictor-corrector strategy:
\begin{enumerate}
	\item \textbf{Predict.}  Given the solution $w^n$ at time level $t = t^n$, we first compute an approximation to $w^{[0]} \approx w^{n+1}$ via
	 \begin{align} 
	 \label{eq:predict}
	\alg w 0 &:= w^n + \dt \left(\Phii(\alg w 0) + \Phie( w^n ) \right) + \frac{\dt^2}{2} \left({\dot{\Phie}}(w^n) - {\dot{\Phii}}(\alg w 0) \right).
	\end{align}
	That is, we perform a forward Taylor expansion on $\Phie$ and a backward Taylor expansion on $\Phii$ and integrate the results.  This   produces a second-order accurate predictor.

	\item \textbf{Correct.} Based on this initial step, for each $0 \leq k \leq k_{\max}-1$ solve
	\begin{align*}
	\alg w {k+1} &:= w^n + {\dt} \left( \alg {\Phii} {k+1} - \alg {\Phii} k\right) - 
	\frac{\dt^2}{2} \left(\alg {\dot {\Phii}} {k+1} - \alg {\dot {\Phii}} {k} \right)
	+ \frac{\dt}{2} \left( \Phi^n + \alg \Phi k\right) + \frac{\dt^2}{12} \left(\alg {\dot {\Phi}^n -\dot \Phi} k \right)
	\end{align*}
	for $\alg w {k+1}$.

	\item \textbf{Update.}  Set $w^{n+1} := \alg w {k_{\max}}$.

\end{enumerate}


\end{algorithm}

Note that the intermediate iterates need not be stored, and therefore this algorithm needs only the solution at a total of two time levels.  This is advantageous when compared to any multistep method, where the solution at each stage needs to be stored, as well as most Runge-Kutta methods (save the methods of the low-storage variety).

\begin{remark} 
	Algorithm \ref{alg:mdode} is an extension of Algorithm \ref{alg:mdvdp}.  That is, 
 with $w = (y,z)$, $\Phie(w) = (z, 0)$, and $\Phii(w)= \left(0, \frac g \eps\right)$, Algorithm \ref{alg:mdode} reduces to Algorithm \ref{alg:mdvdp}. 
\end{remark}

We now present the results for this problem on some classical test cases from the literature.

\subsection{Kaps Problem}

A problem that is not of the form defined in \eqref{eq:vdp} is the so-called Kaps test problem \cite{TC10}
\begin{alignat*}{2}
	y' &= -2y + \frac{1}{\eps} ( z^2 - y ),&\qquad& y(0) = 1, \\
	z' &= y - z( 1 + z ), &\qquad&  z(0) = 1, 
\end{alignat*}
with exact solution $w := (y,z) =  (e^{-2t}, e^{-t} )$ for any $\eps > 0$.

We use the most straightforward splitting on this problem given by grouping all of the terms containing $\eps$ and putting them into the implicit piece of the right hand side:
\begin{align*} 
 \Phie(w) := \left(-2 y, y - z(1+y)\right)^T, \qquad \Phii(w) := \frac 1 \eps \left(z^2-y, 0 \right)^T.
\end{align*}
We present numerical results in Fig. \ref{fig:imexkaps}.  These results echo the findings of the previous section: 
\begin{itemize}
 \item The second-order scheme does not exhibit order degradation. 
 \item For low $k_{\max}$, we observe order degradation.
 \item For $k_{\max} \rightarrow \infty$, the observed order degradation vanishes. 
\end{itemize}

We thus conclude that the algorithm is capable of also computing solutions to equations that are not given in form \eqref{eq:vdp}. This is very important, in particular with respect to an extension of the scheme to singularly perturbed PDEs, where the semi-discretized systems are rarely in the form defined in \eqref{eq:vdp}. 

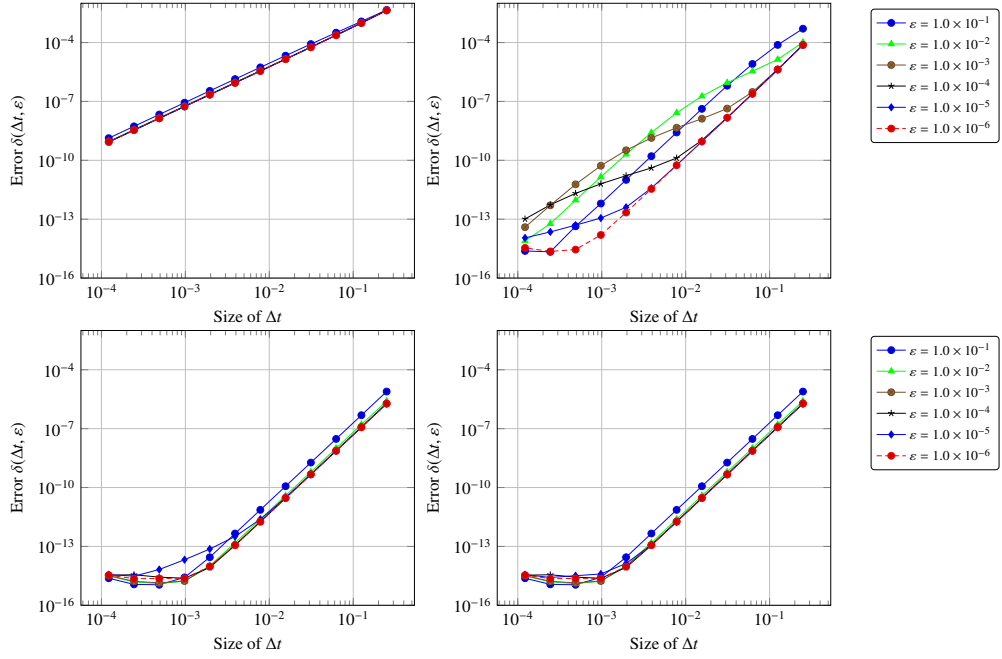
\begin{figure}[h]
	\begin{center}
		\begin{tikzpicture}[scale=0.64]
		\begin{loglogaxis}[ymax=1e-2,ymin=1e-16,xlabel={Size of $\dt$},ylabel={Error $\delta(\dt,\eps)$},grid=major,legend style={at={(0.02,0.98)},anchor=north west,font=\footnotesize,rounded corners=2pt}]
		\addplot                              table[skip first n=1, x index = 0, y index =  1] {convergence_table_kmax0_kaps.dat};
		\addplot[mark=triangle*,color=green]  table[skip first n=1, x index = 0, y index =  3] {convergence_table_kmax0_kaps.dat};
		\addplot                              table[skip first n=1, x index = 0, y index =  5] {convergence_table_kmax0_kaps.dat};
		\addplot                              table[skip first n=1, x index = 0, y index =  7] {convergence_table_kmax0_kaps.dat};
		\addplot                              table[skip first n=1, x index = 0, y index =  9] {convergence_table_kmax0_kaps.dat};
		\addplot                              table[skip first n=1, x index = 0, y index = 11] {convergence_table_kmax0_kaps.dat};
		\end{loglogaxis}
		\end{tikzpicture}
		\begin{tikzpicture}[scale=0.64]
		\begin{loglogaxis}[ymax=1e-2,ymin=1e-16,xlabel={Size of $\dt$},ylabel={Error $\delta(\dt,\eps)$},grid=major,legend style={at={(1.12,0.98)},anchor=north west,font=\footnotesize,rounded corners=2pt}]
		\addplot                              table[skip first n=1, x index = 0, y index =  1] {convergence_table_kmax2_kaps.dat};
		\addplot[mark=triangle*,color=green]  table[skip first n=1, x index = 0, y index =  3] {convergence_table_kmax2_kaps.dat};
		\addplot                              table[skip first n=1, x index = 0, y index =  5] {convergence_table_kmax2_kaps.dat};
		\addplot                              table[skip first n=1, x index = 0, y index =  7] {convergence_table_kmax2_kaps.dat};
		\addplot                              table[skip first n=1, x index = 0, y index =  9] {convergence_table_kmax2_kaps.dat};
		\addplot                              table[skip first n=1, x index = 0, y index = 11] {convergence_table_kmax2_kaps.dat};
        \legend{
 			$\eps = 1.0 \times 10^{-1}$,
 			$\eps = 1.0 \times 10^{-2}$,
 			$\eps = 1.0 \times 10^{-3}$,
 			$\eps = 1.0 \times 10^{-4}$,
 			$\eps = 1.0 \times 10^{-5}$,
 			$\eps = 1.0 \times 10^{-6}$,
 		}
		\end{loglogaxis}
		\end{tikzpicture}
		\\
		\begin{tikzpicture}[scale=0.64]
		\begin{loglogaxis}[ymax=1e-2,ymin=1e-16,xlabel={Size of $\dt$},ylabel={Error $\delta(\dt,\eps)$},grid=major,legend style={at={(1.12,0.98)},anchor=north west,font=\footnotesize,rounded corners=2pt}]
		\addplot                              table[skip first n=1, x index = 0, y index =  1] {convergence_table_kmax20_kaps.dat};
		\addplot[mark=triangle*,color=green]  table[skip first n=1, x index = 0, y index =  3] {convergence_table_kmax20_kaps.dat};
		\addplot                              table[skip first n=1, x index = 0, y index =  5] {convergence_table_kmax20_kaps.dat};
		\addplot                              table[skip first n=1, x index = 0, y index =  7] {convergence_table_kmax20_kaps.dat};
		\addplot                              table[skip first n=1, x index = 0, y index =  9] {convergence_table_kmax20_kaps.dat};
		\addplot                              table[skip first n=1, x index = 0, y index = 11] {convergence_table_kmax20_kaps.dat};
		\end{loglogaxis}
		\end{tikzpicture}		
		\begin{tikzpicture}[scale=0.64]
		\begin{loglogaxis}[ymax=1e-2,ymin=1e-16,xlabel={Size of $\dt$},ylabel={Error $\delta(\dt,\eps)$},grid=major,legend style={at={(1.12,0.98)},anchor=north west,font=\footnotesize,rounded corners=2pt}]
		\addplot                              table[skip first n=1, x index = 0, y index =  1] {convergence_table_kmax100_kaps.dat};
		\addplot[mark=triangle*,color=green]  table[skip first n=1, x index = 0, y index =  3] {convergence_table_kmax100_kaps.dat};
		\addplot                              table[skip first n=1, x index = 0, y index =  5] {convergence_table_kmax100_kaps.dat};
		\addplot                              table[skip first n=1, x index = 0, y index =  7] {convergence_table_kmax100_kaps.dat};
		\addplot                              table[skip first n=1, x index = 0, y index =  9] {convergence_table_kmax100_kaps.dat};
		\addplot                              table[skip first n=1, x index = 0, y index = 11] {convergence_table_kmax100_kaps.dat};
 		\legend{
 			$\eps = 1.0 \times 10^{-1}$,
 			$\eps = 1.0 \times 10^{-2}$,
 			$\eps = 1.0 \times 10^{-3}$,
 			$\eps = 1.0 \times 10^{-4}$,
 			$\eps = 1.0 \times 10^{-5}$,
 			$\eps = 1.0 \times 10^{-6}$,
 		}
		\end{loglogaxis}
		\end{tikzpicture}		
		\caption{Convergence results for Kaps problem with different values of $\eps$. Top left: $k_{\max} = 0$, which amounts to only taking the predictor. Top right: $k_{\max} = 2$, with amounts to taking the full fourth-order scheme. Bottom: $k_{\max}=20$ and $k_{\max}=100$, respectively. Error measure is defined as the Euclidean norm of the $y$ and $z$ error at end time $T_{end} = 1.0$.}\label{fig:imexkaps}
	\end{center}
\end{figure}

\subsection{Stability results} 

In this section, we examine stability of our newly developed scheme.  We pay particular attention to the expected behavior of the solver for convection-diffusion equations, in which case the convection terms would be treated explicitely and the diffusive terms would be treated implicitely, as is common in the literature.  As pointed out in \cite{Ascher1995,Ascher1997}, a suitable prototype equation for the convection diffusion equation
is
\begin{align*}
 w' = (\lambda + i \mu) w \equiv \lambda w + i \mu w, 
\end{align*}
with $\lambda \leq 0$ and $\mu > 0$. The first part is treated implicitly, as it corresponds to the discretization of a diffusive operator, while the second part is treated explicitly. For a more detailed explanation of the relation between this equation and the convection-diffusion equation, we refer to \cite{Ascher1995}. 
To cast this into the framework of Alg. \ref{alg:mdode}, we define 
\begin{align*} 
 \Phii(w) := \lambda w, \qquad \Phie(w) := i \mu w, 
\end{align*}
which then yields 
\begin{align*}
 \dot \Phii(w) = \lambda ( \lambda + i \mu )w, \qquad \dot \Phie(w) = i \mu (\lambda  + i \mu ) w. 
\end{align*}
\newcommand{\llambda}{\tilde \lambda}
\newcommand{\mmu}    {\tilde \mu}
We follow the steps done in \cite{Ascher1997} and define 
\begin{align*}
 \llambda :=  \lambda \dt \leq 0, \qquad \mmu := \mu \dt > 0. 
\end{align*}
The predictor step for Alg. \ref{alg:mdode} can hence be written as 
\begin{align*} 
 \alg w 0 = \frac{1+i\mmu + \frac{i \mmu \llambda} 2 - \frac{\mmu^2}2}{1 -\llambda + \frac{\llambda^2}2 + i \frac{\llambda \mmu} 2} w^n =: \Psi(\llambda,\mmu) w^n.
\end{align*}
As already noticed in \cite{Ascher1997} for IMEX Euler, for $z := \lambda + i \mu$ being on the imaginary axis, i.e., $\lambda = 0$, this can, for $\mu \neq 0$, never yield an unconditionally stable algorithm, as 
\begin{align*} 
 \left| {1+i\mmu - \frac{\mmu^2}2} \right|^2 = \left(1-\frac{\mmu^2}2\right)^2 + \mmu^2 = 1 + \frac{\mmu^4}4 > 1. 
\end{align*}
This is of course not surprising, as the algorithm reduces to a purely explicit time marching scheme. In the spirit of \cite{Ascher1997}, we keep the ratio of $\lambda$ and $\mu$ constant, i.e., we define 
\begin{align*} 
 \gamma := \frac{\lambda}{\mu} \leq 0,
\end{align*}
and investigate whether, for a given ratio of the implicit to explicit eigenvalues, $\gamma$, the algorithm is stable.  This may produces a restrictions on $\mmu \equiv \mu \dt$, which we can modify by changing the time step size. Any restrictions on $\mmu \equiv \mu \dt$ will in practice result in a timestep restriction.
\begin{lemma}
 If $\gamma \leq -1$ then the predictor $\alg w 0$ for the method is stable.  That is, for a single time step, we have
 \begin{align*}
  \|\alg w 0\| \leq \|w^n\|. 
 \end{align*}
\end{lemma}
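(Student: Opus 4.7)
The plan is to reduce the stability claim to showing $|\Psi(\llambda,\mmu)| \leq 1$, where $\Psi$ is the amplification factor displayed above the lemma. First I would compute the squared moduli of the numerator $N := 1 + i\mmu + \frac{i\mmu\llambda}{2} - \frac{\mmu^2}{2}$ and the denominator $D := 1 - \llambda + \frac{\llambda^2}{2} + i\frac{\llambda\mmu}{2}$ separately by splitting into real and imaginary parts. This gives $|N|^2 = (1 - \tfrac{\mmu^2}{2})^2 + \mmu^2(1 + \tfrac{\llambda}{2})^2$ and $|D|^2 = (1 - \llambda + \tfrac{\llambda^2}{2})^2 + \tfrac{\llambda^2 \mmu^2}{4}$.

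Next I would form the difference $|D|^2 - |N|^2$ and expand. After the common $\tfrac{\llambda^2\mmu^2}{4}$ contribution cancels out and all remaining monomials are collected, I expect the identity $|D|^2 - |N|^2 = -2\llambda + 2\llambda^2 - \llambda^3 + \tfrac{\llambda^4}{4} - \llambda\mmu^2 - \tfrac{\mmu^4}{4}$ to drop out. The main obstacle here is purely the algebraic bookkeeping; there is no subtle analytic estimate involved.

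The key step is then to exploit the hypothesis $\gamma := \llambda/\mmu \leq -1$. Substituting $\llambda = \gamma \mmu$ regroups the expression into a polynomial in $\mmu$ of the form $-2\gamma\mmu + 2\gamma^2 \mmu^2 - \gamma(\gamma^2 + 1)\mmu^3 + \tfrac{\gamma^4 - 1}{4}\mmu^4$. Because $\gamma \leq -1$ and $\mmu > 0$ we have $-\gamma \geq 1$ and $\gamma^4 \geq 1$, so each of the four terms is individually non-negative: $-2\gamma\mmu \geq 0$, $2\gamma^2\mmu^2 \geq 0$, $-\gamma(\gamma^2+1)\mmu^3 \geq 0$, and $\tfrac{\gamma^4-1}{4}\mmu^4 \geq 0$.

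Therefore $|D|^2 \geq |N|^2$, i.e.\ $|\Psi(\llambda,\mmu)| \leq 1$, from which $\|\alg w 0\| = |\Psi(\llambda,\mmu)|\,\|w^n\| \leq \|w^n\|$, which is precisely the single-step predictor stability claimed. Note that all four non-negative contributions vanish only when $\mmu = 0$, so the bound is strict off the origin; in particular the borderline case $\gamma = -1$ is still admissible, consistent with the non-strict inequality in the statement.
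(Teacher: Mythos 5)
Your proof is correct and takes essentially the same route as the paper's: both reduce the claim to showing $|\Psi(\gamma\tilde\mu,\tilde\mu)|\le 1$, expand the squared moduli of numerator and denominator, arrive at the same quartic inequality $-2\gamma\tilde\mu+2\gamma^2\tilde\mu^2-(\gamma^3+\gamma)\tilde\mu^3+\tfrac{\gamma^4-1}{4}\tilde\mu^4\ge 0$, and verify it term by term using $\gamma\le-1$ and $\tilde\mu>0$. The only cosmetic difference is that you compute $|D|^2-|N|^2$ in terms of $\tilde\lambda,\tilde\mu$ before substituting $\tilde\lambda=\gamma\tilde\mu$, whereas the paper substitutes first.
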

\begin{proof}
 Note that $\gamma = \frac \lambda \mu = \frac \llambda \mmu$ and consider the expression
 \begin{align*} 
  \left|\Psi(\gamma \mmu, \mmu)\right| \leq 1,
 \end{align*}
 which is equivalent to
 \begin{equation*}
  \left(1-\frac{\mmu^2} 2 \right)^2 + \left(\mmu + \frac{\gamma \mmu^2} 2\right)^2 \leq \left(1-\gamma \mmu+ \frac{(\gamma \mmu)^2} 2\right)^2 + \left(\frac{\gamma \mmu^2} 2\right)^2.
\end{equation*}
This again reduces to
\begin{equation*}
 \frac{1-\gamma^4}{4} \mmu^4 + (\gamma^3 + \gamma)\mmu^3  -2\gamma^2\mmu^2 + 2 \gamma \mmu \leq 0. 
 \end{equation*}
 As $\mmu$ is positive and $\gamma \leq -1$, this proves the claim. 
\end{proof}

Similar analysis can of course be made for the full algorithm. Due to the technical difficulties that come with high-order polynomials in $\lambda$ and $\mu$, we restrict ourselves to a numerical investigation. In Fig. \ref{fig:stability}, the maximum allowable $\mmu$ is shown as a function of $\gamma$ both for the predictor and the full algorithm, where we have restricted ourselves to $k_{\max} = 2$. It can be seen that for $\gamma \rightarrow 0$, the maximum allowable timestep for the predictor tends to zero, while for the full algorithm, it tends to a fixed constant. This is due to the fact that the squared absolute value of the iteration function for $k_{\max}=2$ and $\gamma = 0$ (hence $\lambda = 0$) is given by 
\begin{align*}
 \frac{\mmu^6\,\left(\mmu^6+76\,\mmu^4+1392\,\mmu^2-7488\right)}{82944} + 1, 
\end{align*}    
which is smaller than one between $\mmu = 0$ and $\mmu = 2.075$. \textit{The full algorithm hence gives a significant improvement in stability compared to just the predictor.} 

Finally, we perform our analysis also for the `limiting' method, i.e., the method defined by
\begin{align} 
 \label{eq:limitalg}
 w^{n+1} = w^n + \frac{\dt} 2 \left(\Phi^n + \Phi^{n+1}\right) + \frac{\dt^2}{12} \left( \dot\Phi^{n} - \dot\Phi^{n+1}\right). 
\end{align}
In the case that $\alg w k$ converges with $k \rightarrow \infty$, the limit exactly coincides with $w^{n+1}$ defined in \eqref{eq:limitalg}. In terms of $\llambda$ and $\mmu$, the iteration is given by 
\begin{align*} 
 w^{n+1} = \frac{1 + \frac 1 2 (\llambda + i\mmu) + \frac 1 {12} (\llambda + i\mmu)^2}{1 - \frac 1 2 (\llambda + i\mmu) + \frac 1 {12} (\llambda + i\mmu)^2}w^n =: \Theta(\llambda, \mmu) w^n. 
\end{align*}
For this fully implicit method, we have the following result:
\begin{lemma}
 If $\gamma < 0$, the the method defined in \eqref{eq:limitalg} has an amplification factor that satisfies
 \begin{align*} 
  |\Theta(\gamma \mmu, \mmu)| \leq 1.
 \end{align*}
\end{lemma}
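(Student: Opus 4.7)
The plan is to recognize that the rational function $\Theta(\llambda,\mmu)$ is nothing but the $(2,2)$-Pad\'e approximant of $e^z$ evaluated at $z := \llambda + i\mmu$: writing $N(z) := 1 + \tfrac{z}{2} + \tfrac{z^2}{12}$ and $D(z) := 1 - \tfrac{z}{2} + \tfrac{z^2}{12}$, one has $\Theta(\llambda,\mmu) = N(z)/D(z)$ and, crucially, $D(z) = N(-z)$. The claim is then the classical A-stability of this Pad\'e approximant, and I would prove it by a direct computation of $|N(z)|^2 - |D(z)|^2$ rather than by invoking the general result.

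To carry this out, set $a := \gamma\mmu = \Real(z) \leq 0$ and $b := \mmu = \Imag(z) > 0$. A short calculation gives
\begin{align*}
N(z) &= \Bigl(1 + \tfrac{a}{2} + \tfrac{a^2-b^2}{12}\Bigr) + i\Bigl(\tfrac{b}{2} + \tfrac{ab}{6}\Bigr), \\
D(z) &= \Bigl(1 - \tfrac{a}{2} + \tfrac{a^2-b^2}{12}\Bigr) + i\Bigl(-\tfrac{b}{2} + \tfrac{ab}{6}\Bigr).
\end{align*}
Using the identity $X^2 - Y^2 = (X-Y)(X+Y)$ term by term, the cross-terms between the real and imaginary parts of the squared moduli collapse cleanly and one obtains
\begin{equation*}
|N(z)|^2 - |D(z)|^2 \;=\; 2a\Bigl(1 + \tfrac{a^2-b^2}{12}\Bigr) + 2b\cdot \tfrac{ab}{6} \;=\; a\Bigl(2 + \tfrac{a^2+b^2}{6}\Bigr).
\end{equation*}

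The factor in brackets is strictly positive, so the sign of $|N(z)|^2 - |D(z)|^2$ is the sign of $a = \gamma\mmu$. Since $\gamma < 0$ and $\mmu > 0$, we have $a < 0$, whence $|N(z)| < |D(z)|$ and therefore $|\Theta(\gamma\mmu,\mmu)| \leq 1$, as desired. I do not expect any real obstacle here: the only step requiring care is the algebraic simplification of $|N|^2 - |D|^2$, where the mixed term $2\cdot(b/2)(ab/6)$ combines with the real-part difference to produce the $(a^2+b^2)$ inside the bracket; one must keep track of signs carefully since the imaginary parts of $N$ and $D$ differ only in the sign of $b/2$.
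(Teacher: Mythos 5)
Your proof is correct and follows essentially the same route as the paper's: both reduce $|\Theta(\gamma\mmu,\mmu)|\leq 1$ to comparing the squared moduli of numerator and denominator, and your identity $|N(z)|^2-|D(z)|^2 = a\bigl(2+\tfrac{a^2+b^2}{6}\bigr)$ with $a=\gamma\mmu$, $b=\mmu$ is exactly the paper's condition $\gamma\mmu\left(12+\gamma^2\mmu^2+\mmu^2\right)\leq 0$ up to a factor of $6$. The Pad\'e-approximant observation is a nice framing but does not change the argument.
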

\begin{proof}
 The claim is equivalent to 
 \begin{align*} 
  \left(1 + \frac 1 2 \gamma \mmu + \frac 1 {12} (\gamma^2 \mmu^2 - \mmu^2)\right)^2 + \left(\frac 1 2 \mmu + \frac 1 6 \gamma \mmu^2\right)^2 \leq 
  \left(1 - \frac 1 2 \gamma \mmu + \frac 1 {12} (\gamma^2 \mmu^2 - \mmu^2)\right)^2 + \left(-\frac 1 2 \mmu + \frac 1 6 \gamma \mmu^2\right)^2,
 \end{align*}
 which is then again equivalent to 
 \begin{align*} 
  \gamma \mmu \left( 12 + \gamma^2 \mmu^2 + \mmu^2\right) \leq 0.
 \end{align*}
This is true for any $\gamma < 0$. 

\end{proof}

\begin{figure}[h]
	\begin{center}
		\begin{tikzpicture}[scale=0.9]
		\begin{loglogaxis}[xlabel={$-\gamma = -\frac{\lambda}{\mu}$},ylabel={Maximum stable $\mmu$},grid=major,legend style={at={(0.12,0.98)},anchor=north west,font=\footnotesize,rounded corners=2pt}]
		\addplot table[x index = 0, y index =  1] {results_pred.dat};
		\addplot table[x index = 0, y index =  1] {results_k2.dat};
		\legend{Predictor,{Full algorithm ($k_{\max}=2$)}}
		\end{loglogaxis}
		\end{tikzpicture}		
		\caption{Maximum allowable $\mmu$ to guarantee stability depending on $\gamma$. Note that a restriction on $\mmu \equiv \mu\dt$ will in practice yield a restriction on the timestep $\dt$.}\label{fig:stability}
	\end{center}
\end{figure}
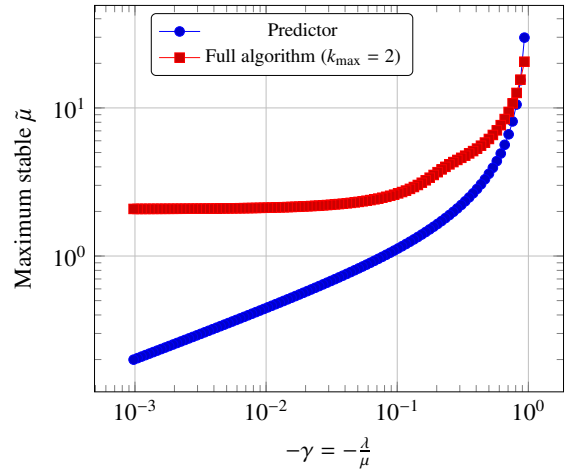

The finding here is that the limiting solver defined in \eqref{eq:limitalg} is numerically stable for any choice of time step.

\section{Conclusion and Outlook}
\label{sec:conout}
In this work we have presented a novel time integrator, featuring aspects of the spectrally deferred correction, implicit-explicit (IMEX) time integrators, and the multiderivative class of ODE integrators. We have shown that the method is asymptotically preserving and we have presented results for a class of test cases from the literature.  In addition, stability results for this solver have been investigated based on a prototype equation for convection-diffusion PDEs. 

There are many extensions of this work that can be found. Next steps include investigating the application of this solver to partial differential equations of the singularly perturbed type.  In particular, we are interested in the compressible Navier-Stokes equations at low Mach number \cite{KlMa81}. Suitable splittings have already been developed in literature, see, e.g., \cite{BispenIMEXSWE,Bispen2017222,CoDeKu12,DeTa,HaJiLi12,Kl95,ArNoLuMu12,RSIMEXFullEuler}. 
%
Similar to spectral deferred correction (SDC) methods, the proposed algorithm can certainly be parallelized in time, which, again in particular for PDEs, could make for a tremendous benefit in a parallel computing environment. Furthermore, extensions of this method that include variable orders of accuracy introduce the potential to investigate adaptive time stepping which would make for interesting results on their own merit.

\section*{Acknowledgements}
This study is the outcome of a research stay of D.C.\ Seal at the University of Hasselt, which was supported by the Special Research Fund (BOF) of Hasselt University. Additional funding came from the Office of Naval Research, grant number N0001419WX01523.


\section*{References}
\bibliographystyle{elsarticle-num}
\bibliography{ListPaper}


\end{document}